\theoremstyle{plain}
\newtheorem{theorem}{Theorem}[section]
\newtheorem{lemma}[theorem]{Lemma}
\newtheorem{corollary}[theorem]{Corollary}
\newtheorem{proposition}[theorem]{Proposition}
\theoremstyle{remark}
\newtheorem{example}[theorem]{Example}
\begin{document}
\title{Flocks of Cones: Herds and Herd Spaces }
\author{William Cherowitzo}
\maketitle
\section{Introduction}

This is the first in a series of articles devoted to providing a
foundation for a theory of flocks of arbitrary cones in
$PG(3,q)$. The desire to have such a theory stems from a need to
better understand the very significant and applicable special
case of flocks of quadratic cones in $PG(3,q)$. Flocks of
quadratic cones have connections with several other geometrical
objects, including certain types of generalized quadrangles,
spreads, translation planes, hyperovals (in even characteristic),
ovoids, inversive planes and quasi-fibrations of hyperbolic
quadrics. This rich collection of interconnections is the basis
for the strong interest in such flocks. Recent work has shown that some of these connections can be made with other types of cones.  The author has attempted
incremental generalizations of flocks of quadratic cones
(\cite{WEC:98},\cite{WEC:98b}) and the similarity of the results
in these investigations indicated the existence of a more general
framework. However, this incremental approach leads to more and
more difficult algebraic considerations that ultimately make this
approach untenable. By jumping to the most general situation and
changing our point of view (as we will do in this series of
articles) we can transcend those algebraic difficulties and
hopefully gain a clearer perspective on the subject. This first paper lays out the fundamentals while later papers will examine special types of flocks.

    The reader should be warned that we have taken some liberties
with terminology (especially the terms ``flock'' and ``herd'') by
redefining objects in a more general context. This was deemed
necessary to avoid having to introduce a more cumbersome set of
terms. However, the use of modifiers will insure that the new
definitions are in agreement with the more commonly used ones in
the appropriate context. We should also mention that we have
restricted ourselves to the finite case only out of preference
for that setting. Infinite analogs of almost everything that
appears here do exist, but we shall leave it to others to develop
these.

\section{Cones and Flocks}

Let $\pi_0$ be a plane and $V$ a point not on $\pi_0$ in
$PG(3,q)$. Let $\mathcal{C}$ be any set of points in $\pi_0$
(including the empty set). A \emph{cone}, $\Sigma =
\Sigma(V,\mathcal{C})$ is the union of all points of $PG(3,q)$ on
the lines $VP$ where $P$ is a point of $\mathcal{C}$. $V$ is
called the \emph{vertex} and $\mathcal{C}$ is called the
\emph{carrier} of $\Sigma$. $\pi_0$ is the \emph{carrier plane}
and the lines $VP$ are the \emph{generators} of $\Sigma$. In the
event that $\mathcal{C} = \emptyset$ we call $\Sigma$ the
\emph{empty cone} and by convention consider it to consist of
only the point $V$.

    A \emph{flock of planes} in $PG(3,q)$ is any set of $q$
\emph{distinct} planes of $PG(3,q)$. As $q$ planes can not cover
all the points of $PG(3,q)$, there always are points of the space
which do not lie in any of the planes in a flock of planes. If
$\Sigma$ is a cone of $PG(3,q)$, then a flock of planes,
$\mathcal{F}$, is said to be a \emph{flock of }$\Sigma$ when the
vertex of $\Sigma$ lies in no plane of $\mathcal{F}$ and no two
planes of $\mathcal{F}$ intersect at a point of $\Sigma$. Any
flock of planes is a flock of a cone, possibly only the empty
cone. In general, however, a given flock of planes will be a flock
of several cones.  In the literature on flocks of quadratic
cones, the approach is always to consider a fixed quadratic cone
and study the flocks of that cone. We will change the viewpoint
and consider, for a fixed flock of planes, the various cones of
which it is a flock. In the sequel we shall refer to a flock of
planes simply as a \emph{flock} and it shall be understood that
it is always a flock of a cone, even if the cone is not
explicitly indicated.

    In order to provide an algebraic representation of a flock we
will need to introduce coordinates. The standard homogeneous
coordinates of a point in $PG(3,q)$ will be given by $\langle
x_0, x_1, x_2, x_3 \rangle$ with $\langle \cdots \rangle$
denoting the fact that we are dealing with an equivalence class.
When a specific element of this equivalence class is needed, it
will be denoted with parentheses. Thus $(a,b,c,d) \in \langle
a,b,c,d \rangle$ is a specific representative of the equivalence
class.

    Let $\mathcal{F}$ be a flock. We can introduce coordinates in
$PG(3,q)$ so that the plane $x_3 = 0$ is one of the planes of the
flock and the point $V = \langle 0,0,0,1 \rangle$ is not in any
plane of the flock. We parameterize the planes of $\mathcal{F}$
with the elements of $GF(q)$ in an arbitrary way except that we
will require that $0$ is the parameter assigned to the plane $x_3
= 0$. We can now describe the flock as, $\mathcal{F} = \{\pi_t
\mid t \in GF(q)\}$ with $\pi_0 \colon x_3 = 0$. Since $V$ is not
in any plane of $\mathcal{F}$, each of the planes of this flock
has an equation of the form $Ax_0 + Bx_1 + Cx_2 - x_3 = 0$.
Consider the points $P = \langle 1,0,0,0 \rangle, Q = \langle
0,1,0,0 \rangle$ and $R = \langle 0,0,1,0 \rangle$ of $\pi_0$.
The points other than $V$ on the lines $VP, VQ \text{ and }VR$
are given by $\langle 1,0,0,\lambda \rangle, \langle 0,1,0,\mu
\rangle$ and $\langle 0,0,1,\nu \rangle$ respectively, with
$\lambda, \mu, \nu$ varying in $GF(q)$. For each $t \in GF(q)$
the plane $\pi_t$ of the flock meets these lines at the points $(
1,0,0,\lambda_t ),( 0,1,0,\mu_t)$ and $( 0,0,1,\nu_t)$
respectively (note the use of specific representatives). We
define three functions $f,g,h \colon GF(q) \to GF(q)$ by $f(t) =
\lambda_t, g(t) = \mu_t \text{ and } h(t) = \nu_t$. These
functions describe the equations of the planes of the flock,
namely $\pi_t \colon f(t)x_0 + g(t)x_1 + h(t)x_2 - x_3 = 0$. The
functions $f,g \text{ and } h$ are called the \emph{coordinate
functions} of the flock. Note that the requirement on the
parameter $0$ means that $f(0) = g(0) = h(0) = 0$. If $f,g \text{
and } h$ are the coordinate functions of the flock $\mathcal{F}$
we shall write $\mathcal{F} = \mathcal{F}(f,g,h)$. We remark that
the coordinate functions of a flock depend on both the
parameterization of the flock and the procedure used to obtain
the functions from the homogeneous coordinates (i.e., the
selection of representatives  of these coordinates). We will have
occasion to change the parameterization of a flock but will never
change this standard procedure for obtaining the functions.

    All cones under consideration will have vertex $V = \langle
0,0,0,1 \rangle$ and we will consider the plane $\pi_0$ as the
carrier plane of the cone. Thus, a cone is determined when its
carrier $\mathcal{C}$, a point set in $\pi_0$, is specified.
Given a flock $\mathcal{F}$, there is a largest set
$\mathcal{C}_0$ of $\pi_0$ such that $\mathcal{F}$ is a flock of
the cone with carrier $\mathcal{C}_0$. This cone is called the
\emph{critical cone} of $\mathcal{F}$. If $\mathcal{C}$ is any
subset of the carrier of the critical cone of a flock
$\mathcal{F}$, then clearly $\mathcal{F}$ is also a flock of the
cone with carrier $\mathcal{C}$. Thus, determining the critical
cone of a flock implicitly determines all cones for which this
flock of planes is a flock.

    The critical cone of a flock may be fairly ``small''. Besides
the empty cone, we will consider cones whose carriers consist of
collinear points as being ``small''. Cones of this type are
called \emph{flat cones}. For the most part, we shall regard
flocks whose critical cones are flat as being uninteresting.

\section{Herds and Herd Spaces}
    Let $\mathcal{Z}$ denote the collection of all functions $f\colon
GF(q) \to GF(q)$ such that $f(0) = 0$. Note that each element of
$\mathcal{Z}$ can be expressed uniquely as a polynomial in one
variable of degree at most $q-1$ and that $\mathcal{Z}$ is a
vector space over $GF(q)$. We shall always consider the vectors of
$\mathcal{Z}$ as polynomials of this type and use
$\{t,t^2,\ldots,t^{q-1}\}$ as the standard basis of
$\mathcal{Z}$. Let $\mathcal{V} = \mathcal{V}(f, g, h)$ denote
the subspace of $\mathcal{Z}$ generated by the vectors $f,g
\mbox{ and } h$ of $\mathcal{Z}$. Also, let $\mathcal{U} = GF(q)
\times GF(q) \times GF(q)$ considered as a vector space over
$GF(q)$. Now, the map $\phi_{f,g,h} \colon \mathcal{U} \to
\mathcal{V}$ given by:
\begin{equation} \label{E:stmap}
\phi_{f,g,h}(a,b,c) = af + bg + ch
\end{equation}
is a non-trivial vector space homomorphism, provided that at least
one of the functions is not the constant function (which we shall
always assume to be the case). In the sequel we will suppress the
subscripts in the name of this homomorphism whenever this will
not lead to confusion. In the usual manner we may construct the
projective geometries
$\mathbf{P}(\mathcal{Z}),\mathbf{P}(\mathcal{U})\mbox{ and
}\mathbf{P}(\mathcal{V})$ from the vector spaces
$\mathcal{Z},\mathcal{U}\mbox{ and }\mathcal{V}$ respectively.
The projective space $\mathbf{P}(\mathcal{Z})$ is isomorphic to
$PG(q-2,q)$ and $\mathbf{P}(\mathcal{U})$ is just $PG(2,q)$ in
its usual representation. It is clear that $\phi$ induces a map
$\hat{\phi}\colon\mathbf{P}(\mathcal{U}) \to
\mathbf{P}(\mathcal{V})$ given by:
\begin{equation} \label{E:stprmap}
\hat{\phi}(\langle a,b,c \rangle) = \langle af + bg + ch \rangle.
\end{equation}
The set of ordered pairs that defines $\hat{\phi}$ (sometimes
called the graph of $\hat{\phi}$), which we denote by
$\Gamma(f,g,h)$, i.e.,
\begin{equation} \label{E:hdsp}
\Gamma(f,g,h) = \{ (\langle a,b,c \rangle,\langle af + bg + ch
\rangle) \mid \langle a,b,c \rangle \in \mathbf{P}(\mathcal{U})\}
\end{equation}
is called the \emph{Herd Space} of the functions $f,g \mbox{ and
} h$.

    In the important case that $\hat{\phi}$ is bijective,
$\mathbf{P}(\mathcal{V})$ is a projective plane and $\hat{\phi}$
is just the inverse of the standard coordinate function, when
$\{f, g, h\}$ is considered as an ordered basis of $\mathcal{V}$.
In this case the herd space can be thought of as a plane in
$\mathbf{P}(\mathcal{Z})$ which has been coordinatized in a
special way.

    For reasons that will become clear later, we say that a herd
space $\Gamma(f,g,h)$ is \emph{degenerate} if there exist
distinct elements $u,v \in GF(q)$ such that $f(u) = f(v), g(u) =
g(v) \text{ and } h(u) = h(v)$. Thus, if any of $f,g \text{ or } h$ is
a permutation (i.e., a permutation polynomial) then
$\Gamma(f,g,h)$ is non-degenerate.

    There are $(q-1)!$ elements of $\mathcal{Z}$ which are permutation
functions. As all non-zero scalar multiples of a permutation are
also permutations, there are $(q-2)!$ points of
$\mathbf{P}(\mathcal{Z})$ which are classes of permutation
functions and will be referred to as \emph{permutation points}.
Let $\mathcal{S}$ denote the set of all permutation points in
$\mathbf{P}(\mathcal{Z})$. A corollary to Hermite's condition for
permutation polynomials (see \cite{LED:01}) states that the degree
of a permutation polynomial over $GF(q)$ is either 1 or it does
not divide $q-1$. Thus, $\mathcal{S}$ lies in the hyperplane
$\langle t, t^2, \ldots,t^{q-2} \rangle$ of
$\mathbf{P}(\mathcal{Z})$.

    We now define a significant partial function of the herd space
$\Gamma(f,g,h)$, called the \emph{Herd Cover} of $f,g \mbox{ and
} h$, and denoted by $\mathcal{HC} = \mathcal{HC}(f,g,h)$, where
\begin{equation} \label{E:hdcv}
\mathcal{HC} = \{ (\langle a,b,c \rangle,\langle af+bg+ch \rangle)
\mid \langle af+bg+ch \rangle \in \mathcal{S}\}.
\end{equation}
Note that $\mathcal{HC}(f,g,h)$ may be (and often is, for
arbitrary $f,g \mbox{ and }h$) the empty set. Also note that, a
priori, there is no geometrical significance to a point being a
permutation point in $\mathbf{P}(\mathcal{V})$, as this is a
purely algebraic notion. However, as we shall see below, these
points gain an important geometric role in the context of flocks.

    For any given herd cover, $\mathcal{HC} =
\mathcal{HC}(f,g,h)$, the image of the standard projection map
onto the first coordinates, $\pi_1\colon\mathcal{HC} \to
\mathbf{P}(\mathcal{U})$, is called the \emph{point set of}
$\mathcal{HC}$, and denoted by $\mathbf{P}_{\mathcal{HC}}$. Any
map $\rho \colon\mathbf{P}(\mathcal{U}) \to \mathcal{U}$ such
that $\rho(\langle a,b,c \rangle) \in \langle a,b,c \rangle$ is
called a \emph{herd selection function}. As an aside we note that
in the infinite case the existence of these choice functions may
require the Axiom of Choice, but since we have restricted
ourselves to the finite case, this issue does not arise. Finally,
the set of representatives of the herd cover determined by the
herd selection function $\rho$, denoted by
$\rho\,\text{-}\mathcal{H}(f,g,h)$, and given by,
\begin{equation} \label{E:herd}
\rho\,\text{-}\mathcal{H}(f,g,h) = \{ (\rho(P),\phi(\rho(P)) \mid
P \in \mathbf{P}_{\mathcal{HC}}\},
\end{equation}
is called the $\rho\,$-\emph{Herd} of the herd cover
$\mathcal{HC}(f,g,h)$. Essentially, a herd is just a
representation of a herd cover where the representative of the
first coordinate is arbitrary and the representative of the
second coordinate depends on the first coordinate choice. It is
clear that there are several $\rho\,$-herds associated to a given
herd cover and that the herd cover is uniquely determined by any
of its $\rho\,$-herds.

Since we have given a formal definition of a $\rho\,$-herd and
freely admit that its notation is a bit cumbersome, we will use a
simpler alternative notation when it will not lead to confusion,
namely,
\begin{equation} \label{E:simpherd}
\rho\,\text{-}\mathcal{H}(f,g,h) = \{ f_P \mid P \in
\mathbf{P}_{\mathcal{HC}},f_P = \phi(\rho(P))\}.
\end{equation}
In this view, a $\rho\,$-herd is an indexed family of permutation
functions where the indexing set is $\mathbf{P}_{\mathcal{HC}}$.

    There are several useful choices for the herd selection function
$\rho$ and we shall discuss a few.  The \emph{standardized herd}
is the $\rho\,\text{-}\mathcal{H}(f,g,h)$ such that
\begin{equation} \label{E:stanherd}
 f_P =
 \begin{cases}
 \frac{a}{c}f + \frac{b}{c}g + h,     &\text{if $c \ne 0$;}  \\
 f + \frac{b}{a}g,                    &\text{if $c=0$ and $a \ne 0$;} \\
 g,                                   &\text{if $a = c = 0$.}
 \end{cases}
   \text{  , where } P = \langle a,b,c \rangle .
\end{equation}
The herd selection function for the standardized herd is the one
which selects the representatives of $\langle a,b,c \rangle$ of
the forms $(x,y,1), (1,m,0) \text{ or } (0,1,0)$. We also have the
\emph{alternate standardized herd} which is the
$\rho\,\text{-}\mathcal{H}(f,g,h)$ such that
\begin{equation} \label{E:astanherd}
 f_P =
 \begin{cases}
 f + \frac{b}{a}g + \frac{c}{a}h,     &\text{if $a \ne 0$;}  \\
 g + \frac{c}{b}h,                    &\text{if $a=0$ and $b \ne 0$;} \\
 h,                                   &\text{if $a = b = 0$.}
 \end{cases}
   \text{  , where } P = \langle a,b,c \rangle .
\end{equation}
The herd selection function here is the one which selects the
representatives of $\langle a,b,c \rangle$ so that the leftmost
non-zero coordinate is 1, i.e. the forms $(1,x,y), (0,1,m) \text{
or} (0,0,1)$. The \emph{normalized herd} is the
$\rho\,\text{-}\mathcal{H}(f,g,h)$ such that
\begin{equation} \label{E:normherd}
 f_P(t) = \frac{af(t) + bg(t) + ch(t)}{af(1) + bg(1) + ch(1)} \text{ where } P = \langle a,b,c \rangle,\forall t \in GF(q) .
\end{equation}
The normalized herd has the property that $f_P(1) = 1, \forall P
\in \mathbf{P}_{\mathcal{HC}}$. The herd selection function for
the normalized herd, unlike the previous examples, involves the
functions $f,g \text{ and } h$ and is given by:
\begin{multline*}
\rho(\langle a,b,c,\rangle) =\\
 \left(
\frac{a}{af(1)+bg(1)+ch(1)},\frac{b}{af(1)+bg(1)+ch(1)},\frac{c}{af(1)+bg(1)+ch(1)}
\right).
\end{multline*}

    We will now examine two special classes of herd spaces which
play a fundamental role in the theory of flocks. It is the
inclusion of these classes which prompts the definition of herd
space that we have given.

    Consider $\mathcal{V} = \mathcal{V}(f, g, h)$. If
$rank(\mathcal{V}) = 1$ (where rank = vector space dimension),
then the functions $f,g \text{ and } h$ are all scalar multiples
of the same function. Suppose then that $f = \alpha F, g = \beta
F, \text{ and } h = \gamma F$ for some function $F \in
\mathcal{Z}$. In this case, (\ref{E:stmap}) becomes $\phi(a,b,c) =
(\alpha a + \beta b + \gamma c)F$ and the kernel of $\hat{\phi}$
is the line $\ell$ with equation $\alpha x + \beta y + \gamma z =
0$ in $\mathbf{P}(\mathcal{U})$. The herd space $\Gamma(\alpha F,
\beta F, \gamma F)$ consists of only two types of ordered pairs,
namely, $(P,\langle 0 \rangle)$ when $P$ is on $\ell$ (here
$\langle 0 \rangle$ denotes the constant function), and
$(P,\langle F \rangle)$ otherwise. The herd cover
$\mathcal{HC}(\alpha F, \beta F, \gamma F)$ has a point set
$\mathbf{P}_{\mathcal{HC}}$ which is either empty if $F$ is not a
permutation or the complement of $\ell$ (an affine subplane of
$\mathbf{P}(\mathcal{U})$) if $F$ is a permutation. The functions
of the $\rho\,$-herds corresponding to the non-empty herd cover
are all scalar multiples of $F$. Herd spaces of this type will be
called \emph{linear herd spaces}.
\begin{proposition} \label{P:lhs}
The point set of the herd cover of a linear herd space is empty
if, and only if, the herd space is degenerate.
\end{proposition}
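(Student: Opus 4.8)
The plan is to show that both conditions in the biconditional are equivalent to the single statement ``$F$ is not a permutation polynomial of $GF(q)$'', where $F$ is the function with $f = \alpha F$, $g = \beta F$, $h = \gamma F$ coming from $rank(\mathcal{V}) = 1$. First I would pin down what the standing hypotheses give here: since we always assume at least one of $f,g,h$ is non-constant, $F$ is non-constant and the scalars $\alpha,\beta,\gamma$ are not all $0$; fix one that is nonzero, say $\alpha \ne 0$ (the other cases being identical).

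Next I would verify that the herd space $\Gamma(\alpha F,\beta F,\gamma F)$ is degenerate if and only if $F$ is not a permutation. For one direction, if $F$ is not a permutation then, $GF(q)$ being finite, $F$ is not injective, so there are distinct $u,v \in GF(q)$ with $F(u) = F(v)$; multiplying through by $\alpha$, $\beta$, $\gamma$ gives $\alpha F(u) = \alpha F(v)$, $\beta F(u) = \beta F(v)$, $\gamma F(u) = \gamma F(v)$, which is precisely the defining condition for degeneracy. Conversely, if the herd space is degenerate, pick distinct $u,v$ witnessing this; in particular $\alpha F(u) = \alpha F(v)$, and since $\alpha \ne 0$ we may cancel to get $F(u) = F(v)$ with $u \ne v$, so $F$ is not injective and hence not a permutation.

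Finally I would combine this with the description of the herd cover of a linear herd space established in the paragraph immediately preceding the statement: $\mathbf{P}_{\mathcal{HC}}(\alpha F,\beta F,\gamma F)$ is empty when $F$ is not a permutation, and equals the complement of the line $\ell$ (hence is nonempty) when $F$ is a permutation. Chaining the equivalences yields ``$\mathbf{P}_{\mathcal{HC}}$ empty'' $\iff$ ``$F$ not a permutation'' $\iff$ ``the herd space is degenerate'', which is the proposition. I do not expect any genuine obstacle here: the whole argument is an unwinding of the definitions of \emph{degenerate} and of \emph{linear herd space}. The only point needing a little care is the bookkeeping with $\alpha,\beta,\gamma$ --- namely observing that they are not all zero, so that cancelling a nonzero one turns the three simultaneous equalities in the definition of degeneracy into the single equality $F(u)=F(v)$ --- after which finiteness of $GF(q)$ (injective $\Leftrightarrow$ permutation) does the rest.
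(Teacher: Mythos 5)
Your proof is correct and follows essentially the same route as the paper: both arguments reduce the proposition to the equivalence ``degenerate $\iff$ $F$ is not a permutation,'' proved by cancelling a nonzero scalar among $\alpha,\beta,\gamma$, and then invoke the description of $\mathbf{P}_{\mathcal{HC}}$ from the preceding paragraph (empty iff $F$ is not a permutation). Your write-up is if anything slightly more explicit about the final chaining of equivalences, which the paper leaves implicit.
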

\begin{proof}
Using the notation of the previous paragraph let
$\mathbf{P}_{\mathcal{HC}}$ be the point set of the herd cover of
the linear herd space $\Gamma =\Gamma(\alpha F, \beta F, \gamma
F)$.  Not all of $\alpha, \beta \text{ and } \gamma$ can be 0,
else $rank(\mathcal{V}) = 0$. So, assume w.l.o.g. that $\alpha
\ne 0$. Now, suppose that this herd space is degenerate. Then
there exist $u \ne v \in GF(q)$ so that $\alpha F(u) = \alpha
F(v)$, and $F$ is not a permutation. On the other hand, if $F$ is
not a permutation then there exist $r \ne s \in GF(q)$ so that
$F(r) = F(s)$. But then, $\alpha F(r) = \alpha F(s), \beta F(r) =
\beta F(s)$ and $\gamma F(r) = \gamma F(s)$, and so, $\Gamma$ is
degenerate.
\end{proof}

    The second case we will consider occurs when $rank(\mathcal{V}) = 2$. In
this case the functions $f,g \text{ and } h$ are linearly
dependent over $GF(q)$, but not all are scalar multiples of the
same function. Thus, there exist constants $\alpha, \beta \text{
and } \gamma$, not all zero, so that $\alpha f + \beta g + \gamma
h = 0$. The kernel of $\hat{\phi}$ is the point $Q = \langle
\alpha,\beta,\gamma \rangle$ of $\mathbf{P}(\mathcal{U})$. Now
consider a line $\ell$ of $\mathbf{P}(\mathcal{U})$ which passes
through $Q$. Such a line has an equation of the form $Ax + By +
Cz = 0$ where not all the coefficients are zero and $A\alpha +
B\beta + C\gamma = 0$. Since not all of $\alpha, \beta \text{ and
} \gamma$ are zero, we can assume w.l.o.g. that $\gamma \ne 0$. If
$P = \langle a,b,c \rangle$ is a point of $\ell$ other than $Q$
then we have,
\begin{align*}
\phi(a,b,c) &= af + bg + ch \\
      &= \frac{1}{\gamma}(a\gamma f + b\gamma g + c\gamma h)\\
      &= \frac{1}{\gamma}((a\gamma - c\alpha)f + (b\gamma -c\beta)g).
\end{align*}
From the fact that both $P$ and $Q$ are on the line $\ell$ we can
derive that $(a\gamma - c\alpha)A + (b\gamma - c\beta)B = 0$.
Now, both of $A$ and $B$ can not be zero, otherwise we would have
that $\gamma C = 0$ and our assumption about $\gamma$ would imply
that $C = 0$. If $A = 0$ then we must have $b\gamma - c\beta = 0$
and so, we obtain:
\begin{equation} \label{E:starherd}
\phi(a,b,c) =
\begin{cases}
\gamma^{-1}(a\gamma - c\alpha)f, &\text{if $A = 0$,} \\
\gamma^{-1}(b\gamma - c\beta)(-\frac{B}{A}f + g)
&\text{otherwise.}
\end{cases}
\end{equation}
In either case, for all the points on $\ell \setminus\{Q\}$, the
associated functions are non-zero multiples of the same non-zero
function. The herd space in this case is described by; $(Q,\langle
0 \rangle)$, and, for each line of $\mathbf{P}(\mathcal{U})$
through $Q$, the points of the line other than $Q$ are associated
with the same point of $\mathbf{P}(\mathcal{V})$. If a point $P$
of $\mathbf{P}(\mathcal{U})$ is in $\mathbf{P}_{\mathcal{HC}}$,
then all the points of the line $PQ$ other than $Q$ are in
$\mathbf{P}_{\mathcal{HC}}$. The herd spaces of this type are
called \emph{proper star herd spaces}. We define a \emph{star
herd space} to be either a proper star herd space or a linear
herd space. Considering that a herd space is a function, we can
rephrase this definition: \emph{a herd space is a star herd space
if, and only if, it is not a bijection.}
\begin{proposition} \label{P:pshs}
The number of lines in the herd cover of a proper star herd space
through the kernel of the herd space is $\mid \mathcal{S} \cap
\mathbf{P}(\mathcal{V}) \mid$.
\end{proposition}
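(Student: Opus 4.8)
The plan is to exploit the structure established in the paragraph preceding the proposition: for a proper star herd space with kernel $Q = \langle \alpha, \beta, \gamma \rangle$, the induced map $\hat{\phi}$ is constant and nonzero on each punctured line $\ell \setminus \{Q\}$, where $\ell$ ranges over the $q+1$ lines of $\mathbf{P}(\mathcal{U})$ through $Q$, and the point set $\mathbf{P}_{\mathcal{HC}}$ is a union of such punctured lines. So the phrase ``a line through $Q$ lies in the herd cover'' should be read as $\ell \setminus \{Q\} \subseteq \mathbf{P}_{\mathcal{HC}}$, and since all points of $\ell \setminus \{Q\}$ share the single image $\hat{\phi}(\ell \setminus \{Q\}) \in \mathbf{P}(\mathcal{V})$, this holds if and only if that common image is a permutation point. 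Thus the task reduces to counting the lines through $Q$ whose common image lies in $\mathcal{S}$.

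First I would show that the assignment $\ell \mapsto \hat{\phi}(\ell \setminus \{Q\})$ is a bijection from the set of lines through $Q$ onto the point set of $\mathbf{P}(\mathcal{V})$. Surjectivity is immediate, since $\mathcal{V}$ is spanned by $f, g, h$, so every point of $\mathbf{P}(\mathcal{V})$ has the form $\langle af+bg+ch \rangle = \hat{\phi}(\langle a,b,c \rangle)$ with $\langle a,b,c \rangle \neq Q$. Injectivity then follows by counting: the nonempty fibers of $\hat{\phi}$ over $\mathbf{P}(\mathcal{U}) \setminus \{Q\}$ are exactly the $q+1$ punctured lines through $Q$, each of size $q$, partitioning the $q^2$ points of $\mathbf{P}(\mathcal{U}) \setminus \{Q\}$; since $\mathbf{P}(\mathcal{V}) \cong PG(1,q)$ also has $q+1$ points, a surjection between two $(q+1)$-element sets is a bijection. (Alternatively, injectivity can be read off directly from (\ref{E:starherd}), where two distinct lines through $Q$ yield nonzero multiples of two distinct reduced functions.)

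Finally, because $\mathcal{V} \subseteq \mathcal{Z}$ we have $\mathbf{P}(\mathcal{V}) \subseteq \mathbf{P}(\mathcal{Z})$, so $\mathcal{S} \cap \mathbf{P}(\mathcal{V})$ is precisely the set of permutation points that actually arise as images of punctured lines through $Q$. Under the bijection above, a line $\ell$ through $Q$ lies in the herd cover exactly when $\hat{\phi}(\ell \setminus \{Q\}) \in \mathcal{S} \cap \mathbf{P}(\mathcal{V})$, and distinct such lines correspond to distinct elements of $\mathcal{S} \cap \mathbf{P}(\mathcal{V})$; hence the count is $\mid \mathcal{S} \cap \mathbf{P}(\mathcal{V}) \mid$, as claimed. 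The only step needing care is pinning down the meaning of ``lines in the herd cover'': one must first observe that $\mathbf{P}_{\mathcal{HC}}$ is genuinely a union of entire punctured lines through $Q$ — which it is, by the previous paragraph — so that counting lines is well posed. After that, everything is bookkeeping through the bijection, and I do not anticipate a substantive obstacle.
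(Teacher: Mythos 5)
Your proof is correct, and it reaches the same bijection the paper uses but sets it up differently. The paper fixes a transversal line $m$ of $\mathbf{P}(\mathcal{U})$ not passing through the kernel $Q$, observes that $\hat\phi\mid_m$ has trivial kernel and is therefore a bijection of $m$ onto $\mathbf{P}(\mathcal{V})$, and then matches the herd-cover lines through $Q$ with the points of $m$ whose images lie in $\mathcal{S}$. You instead work directly with the pencil of lines at $Q$: surjectivity of $\ell \mapsto \hat\phi(\ell\setminus\{Q\})$ comes from $f,g,h$ spanning $\mathcal{V}$, and injectivity from comparing cardinalities ($q+1$ lines onto the $q+1$ points of $\mathbf{P}(\mathcal{V})\cong PG(1,q)$). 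The two devices are interchangeable here; the paper's transversal yields injectivity by pure linear algebra (and would survive in an infinite setting), while your counting step leans on finiteness, which is harmless since the paper restricts to finite $q$ throughout. Your preliminary observation that ``lines in the herd cover'' is well posed because $\mathbf{P}_{\mathcal{HC}}$ is a union of entire punctured lines through $Q$ is a point the paper also uses, phrased as ``all the points other than $Q$ of the line $QR$ are in $\mathbf{P}_{\mathcal{HC}}$.'' Two small blemishes, neither fatal: $\mathbf{P}(\mathcal{U})\setminus\{Q\}$ has $q^2+q$ points, not $q^2$ (the $q+1$ punctured lines of size $q$ do partition it); and asserting that the fibers of $\hat\phi$ are \emph{exactly} the punctured lines is the injectivity you are trying to prove, so it should not be used as an input --- but your actual argument (a surjection between two $(q+1)$-element sets is a bijection) does not depend on it, and your alternative via (\ref{E:starherd}) also goes through.
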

\begin{proof}
Let $\Gamma(f,g,h)$ be a proper star herd space. We shall use the
notation of the above paragraph. Let $m$ be any fixed line of
$\mathbf{P}(\mathcal{U})$ which does not pass through the kernel
$Q$. The restriction of $\hat \phi$ to $m$ has a trivial kernel
and so, is a bijection between $m$ and $\mathbf{P}(\mathcal{V})$.
If $R$ is a point of $m$ such that $\hat\phi\mid_m(R) \in
\mathcal(S)$ then $R \in \mathbf{P}_{\mathcal{HC}}$ and all the
points other than $Q$ of the line $QR$ are in
$\mathbf{P}_{\mathcal{HC}}$. Any line of
$\mathbf{P}(\mathcal{U})$ through $Q$ which is in the herd cover
must intersect $m$ in a point of $\mathbf{P}_{\mathcal{HC}}$,
proving the assertion.
\end{proof}

\section{Flocks and Herd Spaces}

To each flock $\mathcal{F}(f,g,h)$ of $PG(3,q)$ we can naturally
associate the (non-degenerate) herd space $\Gamma(f,g,h)$. By
identifying the points of $\mathbf{P}(\mathcal{U})$ of the herd
space with the points of the plane $x_3=0$ (in the flock) of
$PG(3,q)$ we can attach a geometric significance to the concepts
introduced in the previous section.

Let $\mathcal{F} =\mathcal{F}(f,g,h)$ be a flock of $PG(3,q)$.
Naturally identify the points $\langle a,b,c \rangle \text{ of
}\mathbf{P}(\mathcal{U})$ with the points $\langle a,b,c,0 \rangle
\text{ of the plane }x_3 = 0$. Let $\rho$ be a herd selection
function on $\mathbf{P}(\mathcal{U})$ extended to $x_3 = 0$. For
each point $P$ of $x_3=0$, $\rho$ thus fixes a coordinate
representative for $P$. With $V = (0,0,0,1)$, we fix a coordinate
representative of each point other than $V$ on the line $VP$ of
the form $(a,b,c,\lambda) \text{ where } \rho(P) = (a,b,c,0)$.
Now, each plane $\pi_t$ of $\mathcal{F}$ intersects the line $VP$
in a point other than $V$. We define a function $F_P \colon GF(q)
\to GF(q)$ by $F_P(t) = \lambda$ if $\pi_t \cap VP =
(a,b,c,\lambda)$. Note that for any $P$ we have $F_P(0) = 0$
since $x_3=0$ is always the plane $\pi_0$ of $\mathcal{F}$. The
defining condition is equivalent to $af(t)+ bg(t) + ch(t)
-\lambda = 0$, i.e., $F_P(t) = \lambda = af(t) + bg(t) + ch(t)$.
Thus, the set of ordered pairs $\{(P,\langle F_P \rangle) \mid P
\in x_3 = 0 \}$ defined by the flock $\mathcal{F}$ is clearly
isomorphic to the herd space $\Gamma(f,g,h)$. Note that while the
definition of $F_P$ uses a particular herd selection function, the
resulting $\langle F_P \rangle$ of the herd space is independent
of that choice.

We now see that $F_P$ is a permutation function if, and only if,
no two planes of $\mathcal{F}$ meet the line $VP$ at the same
point. That is to say, $VP$ is a generator line of a cone with
vertex $V$ for which $\mathcal{F}$ is a flock. The set of points
on all such lines form the critical cone of $\mathcal{F}$. Thus,
$\mathbf{P}_{\mathcal{HC}}$ (under the identification) is the
carrier of the critical cone of $\mathcal{F}$. Finally, it should
be clear that $\{F_P \mid P \in \mathbf{P}_{\mathcal{HC}}\}$ is
the $\rho\,$-herd of the herd cover $\mathcal{HC}(f,g,h)$.

Now, we turn to the question of obtaining a flock from a
non-degenerate herd space (degenerate herd spaces do not give
rise to $q$ \emph{distinct} planes). We first note that a herd
space does not determine a unique flock, since for any $k \in
GF(q)^*$ the flocks $\mathcal{F}(f,g,h)$ and
$\mathcal{F}(kf,kg,kh)$ (which are distinct if $k \ne 1$, but
projectively equivalent) give rise to the same herd space.
However, this is the only variation for flocks which give rise to
the same herd space.

\begin{theorem} \label{Th:eqfl}
Two flocks, $\mathcal{F}(f,g,h)$ and $\mathcal{F}(f',g',h')$, give
rise to the same herd space if, and only if, there exists a
non-zero constant $k$ such that $f' = kf, g' = kg$ and $h' = kh$.
\end{theorem}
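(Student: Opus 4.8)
The plan is to prove the non-trivial direction: if $\mathcal{F}(f,g,h)$ and $\mathcal{F}(f',g',h')$ give rise to the same herd space, then $(f',g',h') = (kf,kg,kh)$ for some $k \in GF(q)^*$. The reverse direction is the observation already made in the text before the statement (scaling all three functions by $k$ leaves $\langle af+bg+ch\rangle$ unchanged for every $\langle a,b,c\rangle$), so I would dispose of it in one sentence.

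For the forward direction, the hypothesis that the two herd spaces are equal means that for every point $\langle a,b,c\rangle \in \mathbf{P}(\mathcal{U})$ we have $\langle af+bg+ch\rangle = \langle af'+bg'+ch'\rangle$ as points of $\mathbf{P}(\mathcal{Z})$; equivalently, for each $(a,b,c)$ there is a scalar $\lambda_{a,b,c} \in GF(q)^*$ (well-defined once we fix representatives, and finite since $af+bg+ch \neq 0$ because the herd space of a flock is non-degenerate, hence $\hat\phi$ is a bijection) with
\begin{equation*}
af'+bg'+ch' = \lambda_{a,b,c}\,(af+bg+ch).
\end{equation*}
First I would evaluate at the three standard basis points of $\mathbf{P}(\mathcal{U})$: taking $\langle 1,0,0\rangle$ gives $f' = \lambda_1 f$, taking $\langle 0,1,0\rangle$ gives $g' = \lambda_2 g$, and $\langle 0,0,1\rangle$ gives $h' = \lambda_3 h$, for suitable nonzero scalars $\lambda_1,\lambda_2,\lambda_3$. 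The remaining task is to show $\lambda_1 = \lambda_2 = \lambda_3$.

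To force the scalars to agree I would feed in "mixed" points. Since $\hat\phi$ is a bijection (the herd space of a flock is non-degenerate, so by the rephrased criterion in the previous section it is not a star herd space, hence $\operatorname{rank}(\mathcal V) = 3$ and $\phi$ is injective), the functions $f,g,h$ are linearly independent over $GF(q)$. Evaluating the displayed identity at $\langle 1,1,0\rangle$ gives $\lambda_1 f + \lambda_2 g = f' + g' = \mu(f+g)$ for some $\mu \in GF(q)^*$; rearranging, $(\lambda_1 - \mu)f + (\lambda_2 - \mu)g = 0$, and linear independence of $f,g$ forces $\lambda_1 = \mu = \lambda_2$. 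The identical argument with $\langle 1,0,1\rangle$ (using independence of $f,h$) gives $\lambda_1 = \lambda_3$. Setting $k = \lambda_1$ yields $f' = kf$, $g' = kg$, $h' = kh$, completing the proof.

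The main obstacle — really the only point requiring care — is justifying that $af+bg+ch$ is never the zero function, so that the scalars $\lambda_{a,b,c}$ genuinely exist and the equation $\langle af+bg+ch\rangle = \langle af'+bg'+ch'\rangle$ can be lifted to an honest scalar identity. This is exactly where the non-degeneracy of a flock's herd space is used: a flock consists of $q$ \emph{distinct} planes, so $\hat\phi$ is a bijection, equivalently $\phi$ has trivial kernel, equivalently $f,g,h$ are linearly independent. Linear independence is then reused twice to pin down the scalars, so I would state it once at the outset as the key structural fact and refer back to it.
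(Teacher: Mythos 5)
Your argument for the forward direction is the paper's argument in the case $\operatorname{rank}(\mathcal{V})=3$, but the justification you give for reducing to that case contains a genuine error, and as a result the proof has a gap. You claim that because a flock's herd space is non-degenerate, $\hat\phi$ is a bijection and hence $f,g,h$ are linearly independent. That implication is false: non-degeneracy is the injectivity of the map $t \mapsto (f(t),g(t),h(t))$ from $GF(q)$ (it says the $q$ planes are distinct), whereas $\hat\phi$ being a bijection is the statement that $\operatorname{rank}\mathcal{V}(f,g,h)=3$; these are independent conditions. Linear flocks and proper star flocks are bona fide flocks (the paper devotes Section 6 to them, and for $q\le 3$ \emph{every} flock is of this kind); their herd spaces are non-degenerate, yet $\operatorname{rank}(\mathcal{V})$ is $1$ or $2$, $\hat\phi$ has a nontrivial kernel, and $af+bg+ch$ \emph{is} the zero function at the kernel points. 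In those cases your appeal to linear independence of $f,g$ and of $f,h$ fails (indeed one of $f,g,h$ may itself be the zero function, so the scalar $\lambda_i$ extracted from the corresponding basis point is not even well-defined), and the theorem is not yet proved.

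The statement is still true in the low-rank cases, but it needs a separate argument, which is exactly what the paper supplies: after extracting $f'=k_Qf$, $g'=k_Rg$, $h'=k_Sh$, it splits on $\operatorname{rank}(\mathcal{V})$. For rank $3$ it uses the point $\langle 1,1,1\rangle$ and linear independence (your argument, essentially). For rank $2$ it observes that equality of the herd spaces forces the kernels of $\phi_{f,g,h}$ and $\phi_{f',g',h'}$ to be the same point $\langle a,b,c\rangle = \langle ak_Q, bk_R, ck_S\rangle$ of $\mathbf{P}(\mathcal{U})$, which pins down $k_Q=k_R=k_S$; for rank $1$ it compares the kernel lines in the same way. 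To repair your proof you must either add these two cases or find another device that handles them; the "key structural fact" you isolate at the end simply does not hold for all flocks.
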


\begin{proof}
Two flocks, $\mathcal{F}(f,g,h)$ and $\mathcal{F}(f',g',h')$, give
rise to the same herd space if, and only if,
$\hat\phi_{f',g',h'}(P) = \hat\phi_{f,g,h}(P)$ for each point $P
\in \mathbf{P}(\mathcal{U})$. That is, for each point $P = \langle
a,b,c \rangle$ we have $\langle af + bg + ch\rangle = \langle af'
+ bg' + ch'\rangle$. For the points $Q = \langle 1,0,0 \rangle$,
$R = \langle 0,1,0 \rangle$ and $S = \langle 0,0,1 \rangle$ we
have $\langle f \rangle = \langle f' \rangle$, $\langle g \rangle
= \langle g' \rangle$ and $\langle h \rangle = \langle h'
\rangle$ respectively. Thus, there exist constants, $k_Q, k_R$
and $k_S$ so that $f' = k_Q f, g' = k_R g$ and $h' = k_S h$.

Consider $\mathcal{V} = \mathcal{V}(f, g, h)$. If the rank of
$\mathcal{V} = 3$ then at the point $T = \langle 1,1,1 \rangle$ we
have $\langle f + g + h \rangle = \langle f' + g' + h' \rangle$
and so, there is a non-zero constant $k_T$ so that $f'+g'+h' =
k_T(f+g+h)$. Thus, we obtain $(k_Q - k_T)f + (k_R - k_T)g + (k_S
- k_T)h = 0$. Since $f,g$ and $h$ are linearly independent, we
can conclude that $k_Q = k_R = k_S = k_T$. If the rank of
$\mathcal{V} = 2$ then there is a unique point $P = \langle a,b,c
\rangle$ at which we have $af' + bg' + ch' = 0$. This implies
that $ak_Q f + bk_R g + ck_S h = 0$ and so, the kernel of
$\phi_{f,g,h}$ is $(ak_Q,bk_R,ck_S)$. Since the kernel of
$\phi_{f,g,h}$ must equal the kernel of $\phi_{f',g',h'}$ we have
that $\langle a,b,c \rangle = \langle ak_Q, bk_R, ck_S \rangle$
and so we again can conclude that $k_Q = k_R = k_S$. Finally, if
the rank of $\mathcal{V} = 1$ then the functions $f,g$ and $h$
are scalar multiples of each other. We can assume w.l.o.g. that
there exist scalars $\alpha$ and $\beta$ so that $g = \alpha f$
and $h = \beta f$. The kernel of $\phi_{f,g,h}$ is the line with
equation $x + \alpha y + \beta z = 0$ in
$\mathbf{P}(\mathcal{U})$. The kernel of $\phi_{f',g',h'}$ is the
line with equation $k_Q x + \alpha k_R y + \beta k_S z = 0$.
Since these lines must be the same, we have $k_Q = k_R = k_S$.
\end{proof}

A non-degenerate herd space together with a herd selection
function $\rho$ and the values of $\phi(\rho(P))$ for three
non-collinear points $P$ do determine a unique flock. Embed the
projective plane $\mathbf{P}(\mathcal{U})$ of the herd space
$\Gamma(f,g,h)$ in $PG(3,q)$ as the plane $x_3=0$ by $\langle
a,b,c \rangle \mapsto \langle a,b,c,0 \rangle$. The functions
$f,g \text{ and } h$ can be obtained by using $\rho$ as follows:
Let $P_1, P_2$ and $P_3$ be the three given non-collinear points.
If $\rho(P_1) = (a_1,a_2,a_3)$, $\rho(P_2) = (b_1,b_2,b_3)$,
$\rho(P_3) = (c_1,c_2,c_3)$ and $\phi(\rho(P_i)) = f_i, i = 1,2,3
$ we have:
 \begin{equation*}
 \left(
    \begin{matrix}
    f \\
    g\\
    h
    \end{matrix}
    \right) = \left(
    \begin{matrix}
    a_1 & a_2 & a_3  \\
    b_1 & b_2 & b_3  \\
    c_1 & c_2 & c_3
    \end{matrix}
    \right)^{-1} \left(
    \begin{matrix}
    f_1 \\
    f_2\\
    f_3
    \end{matrix}
    \right).
  \end{equation*}
\noindent The matrix inverse exists since the points are
non-collinear. Now we can form the flock $\mathcal{F} =
\mathcal{F}(f,g,h)$. Note that we obtain $q$ planes since
$\Gamma(f,g,h)$ is non-degenerate.

In the construction of a herd space from a flock, a herd
selection function was used to define the functions $F_P$. If
this herd selection function is used with the constructed herd
space to obtain a flock, then it should be clear that the original
flock is recaptured if the functions $F_P$ are used as the $f_i$
selections.

The above discussion can be summarized as,

\begin{theorem}
  Any flock $\mathcal{F}(f,g,h)$ gives rise to a unique herd space
  $\Gamma(f,g,h)$. On the other hand, any non-degenerate herd space
    $\Gamma(f,g,h)$ gives rise to several flocks related as in
    Theorem \ref{Th:eqfl}. A unique flock can be constructed from
    a herd space if a herd selection function $\rho$ is given and the value
    of $\phi(\rho(P))$ is known for three non-collinear points P.
    For each flock that can be constructed from a herd space, the critical cone of
    the flock has $\mathbf{P}_{\mathcal{HC}}$ as its carrier.
\end{theorem}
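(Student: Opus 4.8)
The statement is a recapitulation of the constructions carried out in this section, so the plan is to prove it by assembling those constructions in order, verifying at each stage that nothing beyond what has already been established is required.

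For the first assertion, given a flock $\mathcal{F}(f,g,h)$ its coordinate functions $f,g,h$ are fixed by the standard procedure described in Section~2, and the identification $\langle a,b,c\rangle \leftrightarrow \langle a,b,c,0\rangle$ together with the computation $F_P = af+bg+ch$ exhibits the set $\{(P,\langle F_P\rangle)\}$ as precisely $\Gamma(f,g,h)$ in the sense of \eqref{E:hdsp}; since the ordered pair attached to each $P$ is forced by $(f,g,h)$, this herd space is unique. I would also record here that $\Gamma(f,g,h)$ is non-degenerate: if $f(u)=f(v),\,g(u)=g(v),\,h(u)=h(v)$ for some $u\neq v$, the planes $\pi_u$ and $\pi_v$ of $\mathcal{F}$ would coincide, contradicting the requirement that a flock consist of $q$ distinct planes. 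This non-degeneracy is exactly what will be needed in the converse direction.

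For the second assertion, Theorem~\ref{Th:eqfl} gives that any two flocks with the same herd space differ by a common non-zero scalar, so the flocks giving rise to $\Gamma(f,g,h)$ are exactly the members of $\{\mathcal{F}(kf,kg,kh) : k \in GF(q)^*\}$; this family is non-empty because $\mathcal{F}(f,g,h)$ itself is a flock --- its $q$ planes $\pi_t$ are distinct precisely because $\Gamma(f,g,h)$ is non-degenerate --- and one checks directly that the coordinate functions of $\mathcal{F}(f,g,h)$ are $f,g,h$ again, so it does recover the given herd space. For the uniqueness-under-extra-data assertion I would simply quote the matrix identity displayed just before the theorem: with $\rho$ and non-collinear $P_1,P_2,P_3$ given and $\phi(\rho(P_i))=f_i$, the matrix with rows $\rho(P_1),\rho(P_2),\rho(P_3)$ is invertible by non-collinearity, so the linear system determines $(f,g,h)$ uniquely, and hence the flock $\mathcal{F}(f,g,h)$; it has $q$ planes by non-degeneracy, and the consistency remark closing the section shows that reusing the same herd selection function recaptures the original flock when one starts from a flock.

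For the last assertion, the analysis of $F_P$ in this section shows that $F_P$ is a permutation function if and only if the line $VP$ meets the $q$ planes of $\mathcal{F}$ in $q$ distinct points, i.e.\ $VP$ is a generator of a cone of which $\mathcal{F}$ is a flock; as the critical cone is the union of all such generators, its carrier is the set of $P$ with $\langle af+bg+ch\rangle \in \mathcal{S}$, which is $\mathbf{P}_{\mathcal{HC}}$ by \eqref{E:hdcv} and the definition of the point set of a herd cover. Finally, replacing $(f,g,h)$ by $(kf,kg,kh)$ leaves every pair $(P,\langle af+bg+ch\rangle)$ unchanged, hence leaves $\mathbf{P}_{\mathcal{HC}}$ unchanged, so the assertion holds for every flock constructible from the herd space. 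Since each step is a citation of an argument already completed, there is no real obstacle; the only point that rewards a moment's care is confirming that the fourth assertion is insensitive to the scalar ambiguity in the first two, which the projective-class observation settles at once.
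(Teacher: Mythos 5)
Your proposal is correct and follows essentially the same route as the paper, which offers no separate proof but treats the theorem as a summary of the preceding discussion: the $F_P = af+bg+ch$ computation identifying the herd space of a flock, Theorem \ref{Th:eqfl} for the scalar ambiguity, the matrix-inversion construction for uniqueness given $\rho$ and three non-collinear points, and the observation that $F_P$ is a permutation exactly when $VP$ is a generator of the critical cone. Your added checks --- that non-degeneracy of $\Gamma(f,g,h)$ follows from the distinctness of the $q$ planes, and that $\mathbf{P}_{\mathcal{HC}}$ is invariant under the replacement $(f,g,h)\mapsto(kf,kg,kh)$ --- are consistent with remarks the paper makes in passing.
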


\begin{corollary}[The General Herd Theorem]
If the herd cover of a non-degenerate herd space $\Gamma(f,g,h)$
has a point set which contains three non-collinear points, then
any $\rho\,$-herd of this herd cover gives rise to a unique flock
$\mathcal{F}$ whose critical cone has $\mathbf{P}_{\mathcal{HC}}$
as its carrier. Conversely, for any flock $\mathcal{F}(f,g,h)$ in
$PG(3,q)$ and any herd selection function $\rho$, there is a
$\rho\,$-herd for which the the corresponding
$\mathbf{P}_{\mathcal{HC}}$ is the carrier of the critical cone
of the flock.
\end{corollary}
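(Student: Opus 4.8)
The plan is to obtain this corollary directly from the preceding theorem together with the flock-to-herd-space analysis at the beginning of this section; the corollary is in essence a repackaging of those results, with the new hypothesis---that $\mathbf{P}_{\mathcal{HC}}$ contains three non-collinear points---being exactly what is needed to make the reconstruction of the preceding theorem applicable.

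For the forward implication, suppose $\Gamma(f,g,h)$ is a non-degenerate herd space whose herd cover has point set $\mathbf{P}_{\mathcal{HC}}$ containing three non-collinear points $P_1,P_2,P_3$, and fix a $\rho\,$-herd of the herd cover. By (\ref{E:herd}) this $\rho\,$-herd records, for each $P\in\mathbf{P}_{\mathcal{HC}}$, both the representative $\rho(P)\in\mathcal{U}$ and the actual function $f_P=\phi(\rho(P))\in\mathcal{V}\subseteq\mathcal{Z}$ (not merely its projective class); in particular $\rho(P_i)$ and $f_{P_i}$ are known for $i=1,2,3$. Since the $P_i$ are non-collinear, the matrix $M$ with rows $\rho(P_1),\rho(P_2),\rho(P_3)$ is invertible, so I would feed these data into the matrix formula of the preceding theorem to recover specific functions $f,g,h$; these are precisely the representative functions implicit in $\phi$, because the relations $f_{P_i}=\phi(\rho(P_i))$ say exactly that $(f_{P_1},f_{P_2},f_{P_3})^{T}=M(f,g,h)^{T}$. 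Forming $\mathcal{F}(f,g,h)$ then produces $q$ distinct planes since $\Gamma$ is non-degenerate, and by the last assertion of the preceding theorem its critical cone has carrier $\mathbf{P}_{\mathcal{HC}}$. Uniqueness is inherited: any flock $\mathcal{F}(f',g',h')$ reproducing the given $\rho\,$-herd satisfies $\phi_{f',g',h'}(\rho(P_i))=f_{P_i}$ for $i=1,2,3$, whence $M(f',g',h')^{T}=M(f,g,h)^{T}$ and, $M$ being invertible, $(f',g',h')=(f,g,h)$.

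For the converse, I would appeal to the analysis at the beginning of this section. Given any flock $\mathcal{F}(f,g,h)$ of $PG(3,q)$ and any herd selection function $\rho$, that analysis attaches to the flock the non-degenerate herd space $\Gamma(f,g,h)$, realizes the functions $F_P$ as $af+bg+ch$ for $P=\langle a,b,c\rangle$, and identifies $\mathbf{P}_{\mathcal{HC}}$---the set of $P$ for which $F_P$ is a permutation function---with the carrier of the critical cone of $\mathcal{F}$. Because $\mathbf{P}_{\mathcal{HC}}$ depends only on which points of $\mathbf{P}(\mathcal{U})$ are carried into $\mathcal{S}$, it is independent of $\rho$; hence $\{F_P\mid P\in\mathbf{P}_{\mathcal{HC}}\}$ is a $\rho\,$-herd (indeed the $\rho\,$-herd) of $\mathcal{HC}(f,g,h)$ whose point set is the carrier of the critical cone, as required.

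I do not expect a genuine obstacle, since the substantive work is already contained in the preceding theorem and in this section's correspondence. The only point demanding care is the bookkeeping in the forward direction: one must observe that a $\rho\,$-herd, regarded as a set of pairs of honest vectors and honest functions rather than of projective classes, supplies exactly the input---$\rho$ evaluated at three non-collinear points of $\mathbf{P}_{\mathcal{HC}}$ together with the corresponding values of $\phi$---that the reconstruction demands, and that no knowledge of $\rho$ off $\mathbf{P}_{\mathcal{HC}}$ is needed, because once $f,g,h$ have been recovered the flock $\mathcal{F}(f,g,h)$ is determined outright.
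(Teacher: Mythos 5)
Your proposal is correct and follows essentially the same route as the paper: the paper's proof simply observes that three non-collinear points of $\mathbf{P}_{\mathcal{HC}}$ supply the pairs $(\rho(P),\phi(\rho(P)))$ needed for the matrix reconstruction of the preceding theorem, and declares the converse obvious. You have merely filled in the bookkeeping (the invertibility of $M$ and the resulting uniqueness of $f,g,h$) that the paper leaves implicit.
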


\begin{proof}
Since the point set of the herd cover contains three
non-collinear points, the $\rho\,$-herd will contain at least
three ordered pairs of the form $(\rho(P),\phi(\rho(P))$
corresponding to these points. This is the data which is needed to
construct a unique flock from a non-degenerate herd space. The
converse is obvious.
\end{proof}

In light of this theorem, we will refer to the (non-degenerate)
herd space $\Gamma(f,g,h)$ as being the \emph{herd space of}
$\mathcal{F} = \mathcal{F}(f,g,h)$, since the choice of $\rho$ is
immaterial. We will refer to the $\rho\,$-herd of
$\mathcal{HC}(f,g,h)$ as being the \emph{ herd of $\mathcal{F}$},
only when the herd selection function $\rho$ is clearly
understood. It will also be convenient to abuse notation and
refer to a representative of a point of $\mathbf{P}(\mathcal{V})$
as being a \emph{function of the herd space}. \\

\section{Equivalence of Herd Spaces}

Two herd spaces $\Gamma(f,g,h) \text{ and } \Gamma(f',g',h')$ are
defined to be \emph{equivalent} if there exist a collineation
$\psi \colon \mathbf{P}(\mathcal{U}) \to \mathbf{P}(\mathcal{U})$
and a collineation $\tau \colon \mathbf{P}(\mathcal{Z}) \to
\mathbf{P}(\mathcal{Z})$ with $\tau( \langle f,g,h \rangle ) =
\langle f',g',h' \rangle$  such that the following diagram
commutes,
\begin{equation} \label{D:comm}
 \begin{CD}
\mathbf{P}(\mathcal{U})  @>\hat\phi_{f,g,h}>>   \langle f,g,h \rangle\\
@V\psi VV                                          @V\tau VV\\
\mathbf{P}(\mathcal{U})  @>\hat\phi_{f',g',h'}>>  \langle
f',g',h'\rangle
 \end{CD}
\end{equation}

\noindent and $\psi(\mathcal{P}_{\mathcal{HC}(f,g,h)}) =
\mathcal{P}_{\mathcal{HC}(f',g',h')}$. If the two herd spaces are
equivalent, and $\mathcal{P}_{\mathcal{HC}(f,g,h)} =
\mathcal{P}_{\mathcal{HC}(f',g',h')}$ then they are said to be
\emph{strongly equivalent}. If, in addition to being strongly
equivalent, $\tau$ preserves $\mathcal{S}$, then $\Gamma(f,g,h)$
and $\Gamma(f',g',h')$ are said to be \emph{herd equivalent}.
Clearly, herd equivalence implies strong equivalence which in
turn implies equivalence. Two flocks are said to be \emph{(herd,
strongly) equivalent} if their associated herd spaces are (herd,
strongly) equivalent.

\begin{lemma} \label{L:comm}
    If in the definition of equivalent herd spaces, diagram
    (\ref{D:comm}) commutes, then $\psi(\mathcal{P}_{\mathcal{HC}(f,g,h)}) =
\mathcal{P}_{\mathcal{HC}(f',g',h')}$ if and only if $\tau(
\mathcal{S} \cap \langle f,g,h \rangle ) = \mathcal{S} \cap
\langle f',g',h' \rangle$. In particular, if $\tau = id$ then the
condition $\psi(\mathcal{P}_{\mathcal{HC}(f,g,h)}) =
\mathcal{P}_{\mathcal{HC}(f',g',h')}$ is superfluous.
\end{lemma}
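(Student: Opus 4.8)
The plan is to chase the commuting square \eqref{D:comm}, after first rewriting the two point sets in terms of the maps involved. By the definition \eqref{E:hdcv} of the herd cover, $\mathcal{P}_{\mathcal{HC}(f,g,h)}$ is precisely the set of points $P\in\mathbf{P}(\mathcal{U})$ at which $\hat\phi_{f,g,h}$ is defined and satisfies $\hat\phi_{f,g,h}(P)\in\mathcal{S}$. Since every nonzero vector of $\mathcal{V}(f,g,h)$ has the form $af+bg+ch$, the map $\hat\phi_{f,g,h}$ is onto $\langle f,g,h\rangle$, so $\mathcal{P}_{\mathcal{HC}(f,g,h)}=\hat\phi_{f,g,h}^{-1}\bigl(\mathcal{S}\cap\langle f,g,h\rangle\bigr)$ and, crucially, $\hat\phi_{f,g,h}\bigl(\mathcal{P}_{\mathcal{HC}(f,g,h)}\bigr)=\mathcal{S}\cap\langle f,g,h\rangle$; the analogous identities hold for the primed data. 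Because permutation points are nonzero, the fact that $\hat\phi$ is only a partial map (undefined on the kernel of $\phi$) never gets in the way: every point we handle automatically lies off that kernel, and commutativity of \eqref{D:comm} in fact forces $\psi$ to carry $\ker\phi_{f,g,h}$ onto $\ker\phi_{f',g',h'}$.

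Next I would extract a pointwise reformulation. Fix $P\in\mathbf{P}(\mathcal{U})$ at which $\hat\phi_{f,g,h}$ is defined; then commutativity of \eqref{D:comm} gives $\hat\phi_{f',g',h'}(\psi(P))=\tau\bigl(\hat\phi_{f,g,h}(P)\bigr)$, and $\psi(P)$ lies off $\ker\phi_{f',g',h'}$. Since $\tau$ is a bijection carrying $\langle f,g,h\rangle$ onto $\langle f',g',h'\rangle$, the point $\hat\phi_{f,g,h}(P)$ lies in $\mathcal{S}\cap\langle f,g,h\rangle$ exactly when $\hat\phi_{f',g',h'}(\psi(P))$ lies in $\tau\bigl(\mathcal{S}\cap\langle f,g,h\rangle\bigr)$. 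This yields the equivalence $(\ast)$: for every $P$, $P\in\mathcal{P}_{\mathcal{HC}(f,g,h)}$ if and only if $\hat\phi_{f',g',h'}(\psi(P))\in\tau\bigl(\mathcal{S}\cap\langle f,g,h\rangle\bigr)$ (both sides being false when $P\in\ker\phi_{f,g,h}$).

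Both implications of the lemma then fall out of $(\ast)$. If $\tau\bigl(\mathcal{S}\cap\langle f,g,h\rangle\bigr)=\mathcal{S}\cap\langle f',g',h'\rangle$, then, since $\hat\phi_{f',g',h'}(\psi(P))$ always lies in $\langle f',g',h'\rangle$, the right-hand side of $(\ast)$ reads $\hat\phi_{f',g',h'}(\psi(P))\in\mathcal{S}$, i.e. $\psi(P)\in\mathcal{P}_{\mathcal{HC}(f',g',h')}$; as $\psi$ is a bijection of $\mathbf{P}(\mathcal{U})$ this gives $\psi\bigl(\mathcal{P}_{\mathcal{HC}(f,g,h)}\bigr)=\mathcal{P}_{\mathcal{HC}(f',g',h')}$. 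Conversely, assuming that equality of point sets, I would apply $\hat\phi_{f',g',h'}$ to both sides: by commutativity and the first paragraph the left side becomes $\tau\bigl(\hat\phi_{f,g,h}(\mathcal{P}_{\mathcal{HC}(f,g,h)})\bigr)=\tau\bigl(\mathcal{S}\cap\langle f,g,h\rangle\bigr)$, while the right side becomes $\mathcal{S}\cap\langle f',g',h'\rangle$. Finally, for the last sentence of the lemma, when $\tau=id$ the hypothesis $\tau(\langle f,g,h\rangle)=\langle f',g',h'\rangle$ forces $\langle f,g,h\rangle=\langle f',g',h'\rangle$, so $\tau\bigl(\mathcal{S}\cap\langle f,g,h\rangle\bigr)=\mathcal{S}\cap\langle f,g,h\rangle=\mathcal{S}\cap\langle f',g',h'\rangle$ holds with nothing to prove; by what was just shown, $\psi\bigl(\mathcal{P}_{\mathcal{HC}(f,g,h)}\bigr)=\mathcal{P}_{\mathcal{HC}(f',g',h')}$ is then automatic, hence superfluous as a separate hypothesis in the definition of equivalent herd spaces.

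The only delicate point — and the place I would look at most carefully — is the bookkeeping in the first paragraph: checking that $\hat\phi_{f,g,h}$ really maps $\mathcal{P}_{\mathcal{HC}(f,g,h)}$ \emph{onto} all of $\mathcal{S}\cap\langle f,g,h\rangle$ (not merely into it), and pinning down what ``the diagram commutes'' means for partial maps so that it does the work of relating the two kernels. Once that is in place, the remainder is a routine diagram chase and I anticipate no genuine obstacle.
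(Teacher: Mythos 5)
Your proof is correct and follows essentially the same route as the paper's: both rest on the identity $\hat\phi_{f,g,h}\bigl(\mathcal{P}_{\mathcal{HC}(f,g,h)}\bigr)=\mathcal{S}\cap\langle f,g,h\rangle$ (and its primed analogue) together with the commutativity relation $\tau\bigl(\hat\phi_{f,g,h}(T)\bigr)=\hat\phi_{f',g',h'}\bigl(\psi(T)\bigr)$. Your pointwise reformulation $(\ast)$ simply spells out the paper's terse closing ``if and only if'' (in particular supplying the onto-$\mathcal{S}\cap\langle f,g,h\rangle$ check and the bijectivity of $\psi$ that make both implications airtight), so there is no substantive difference in approach.
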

\begin{proof}
Let $T = \mathcal{P}_{\mathcal{HC}(f,g,h)}$ and $T' =
\mathcal{P}_{\mathcal{HC}(f',g',h')}$. By definition,
$\hat\phi_{f,g,h}(T) = \mathcal{S} \cap \langle f,g,h \rangle$
and $\hat\phi_{f',g',h'}(T') = \mathcal{S} \cap \langle f',g',h'
\rangle$. Commutativity of diagram (\ref{D:comm}) gives
$\tau(\hat\phi_{f,g,h}(T)) = \hat\phi_{f',g',h'}(\psi(T))$. The
RHS = $\mathcal{S} \cap \langle f',g',h' \rangle$ if and only if
$\psi(T) = T'$.
\end{proof}

While it is  true that star flocks are not generally equivalent,
we do have:

\begin{proposition} \label{P:linear3}
    All linear flocks are equivalent.
\end{proposition}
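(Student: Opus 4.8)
The plan is to first determine the precise shape of a linear flock, then to build the two collineations required in the definition of equivalence, relying only on the transitivity of the projective linear groups on lines (for $\psi$) and on points (for $\tau$).

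First I would recall the description of linear herd spaces from the paragraph preceding Proposition~\ref{P:lhs}: a flock $\mathcal{F}(f,g,h)$ is linear exactly when $rank(\mathcal{V}(f,g,h))=1$, so $f=\alpha F,\ g=\beta F,\ h=\gamma F$ for some $F\in\mathcal{Z}$ and some $(\alpha,\beta,\gamma)\neq(0,0,0)$. Because a flock consists of $q$ distinct planes, its herd space is non-degenerate, so by Proposition~\ref{P:lhs} the point set of the herd cover is nonempty; hence (again by that paragraph) $F$ is a permutation function, $\mathbf{P}(\mathcal{V})$ is the single point $\langle F\rangle$, the map $\hat\phi_{f,g,h}$ sends every point of the line $\ell\colon \alpha x+\beta y+\gamma z=0$ to the constant function $\langle 0\rangle$ and every point off $\ell$ to $\langle F\rangle$, and $\mathcal{P}_{\mathcal{HC}(f,g,h)}=\mathbf{P}(\mathcal{U})\setminus\ell$.

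Now take two linear flocks, $\mathcal{F}(\alpha F,\beta F,\gamma F)$ and $\mathcal{F}(\alpha'F',\beta'F',\gamma'F')$, with kernel lines $\ell$ and $\ell'$ respectively. Since $PGL(3,q)$ acts transitively on the lines of $\mathbf{P}(\mathcal{U})\cong PG(2,q)$, I would choose a collineation $\psi\colon\mathbf{P}(\mathcal{U})\to\mathbf{P}(\mathcal{U})$ with $\psi(\ell)=\ell'$; then $\psi\bigl(\mathbf{P}(\mathcal{U})\setminus\ell\bigr)=\mathbf{P}(\mathcal{U})\setminus\ell'$, i.e. $\psi(\mathcal{P}_{\mathcal{HC}(f,g,h)})=\mathcal{P}_{\mathcal{HC}(f',g',h')}$. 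Since $PGL(q-1,q)$ acts transitively on the points of $\mathbf{P}(\mathcal{Z})\cong PG(q-2,q)$, I would choose a collineation $\tau\colon\mathbf{P}(\mathcal{Z})\to\mathbf{P}(\mathcal{Z})$ with $\tau(\langle F\rangle)=\langle F'\rangle$; as $\langle f,g,h\rangle=\langle F\rangle$ and $\langle f',g',h'\rangle=\langle F'\rangle$, this gives $\tau(\langle f,g,h\rangle)=\langle f',g',h'\rangle$. It then remains only to verify that diagram~(\ref{D:comm}) commutes, which is immediate from the explicit form of $\hat\phi$ in the linear case: for a point $P\notin\ell$ both ways around the square give $\langle F'\rangle$, and for $P\in\ell$ both ways give the constant function $\langle 0\rangle$, which is fixed by the linear map underlying $\tau$. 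Hence the two herd spaces, and therefore the two flocks, are equivalent.

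I do not expect a genuine obstacle; the argument is essentially transitivity of $PGL$. The only points demanding care are the bookkeeping around the entries $\langle 0\rangle$ of a linear herd space, which are not honest points of $\mathbf{P}(\mathcal{Z})$ but are still respected by $\tau$ since a linear map fixes $0$, and the observation that $\psi$ and $\tau$ may be chosen independently of one another: because $\hat\phi$ is constant on each of the two pieces $\ell$ and $\mathbf{P}(\mathcal{U})\setminus\ell$, commutativity of the square imposes no relation between $\psi$ and $\tau$ beyond $\psi(\ell)=\ell'$ and $\tau(\langle F\rangle)=\langle F'\rangle$. (Alternatively, one could deduce the herd-cover condition from Lemma~\ref{L:comm}, noting that $\langle F\rangle$ and $\langle F'\rangle$ lie in $\mathcal{S}$.)
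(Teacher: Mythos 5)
Your proof is correct and follows essentially the same route as the paper's: choose $\tau$ by transitivity of the collineation group of $\mathbf{P}(\mathcal{Z})$ on points, choose $\psi$ by transitivity on lines of $\mathbf{P}(\mathcal{U})$ sending one kernel line to the other, and note that the herd-cover point sets are the complements of these lines. The extra details you supply (why $F$ is a permutation, the explicit check of commutativity, and the handling of $\langle 0\rangle$) are points the paper leaves as ``obvious.''
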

\begin{proof}
Let $\mathcal{F}(f,g,h)$ and $\mathcal{F}(f',g',h')$ be linear
flocks. $\langle f,g,h \rangle$ and $\langle f',g',h' \rangle$
are points of $\mathbf{P}(\mathcal{Z})$ and since the
automorphism group of $\mathbf{P}(\mathcal{Z})$ acts transitively
on its points, we can find a $\tau$ in this group with $\tau(
\langle f,g,h \rangle ) = \langle f',g',h' \rangle$. The kernels
of $\Gamma(f,g,h)$ and $\Gamma(f',g',h')$ are lines $\ell$ and
$\ell'$ of $\mathbf{P}(\mathcal{U})$ respectively. Since the
automorphism group of $\mathbf{P}(\mathcal{U})$ acts transitively
on its lines, we can find an automorphism $\psi$ of
$\mathbf{P}(\mathcal{U})$ with $\psi(\ell) = \ell'$. Since the
point sets of the herd covers of these two linear herd spaces are
the complements, in $\mathbf{P}(\mathcal{U})$, of these kernels
we have $\psi(\mathcal{P}_{\mathcal{HC}(f,g,h)}) =
\mathcal{P}_{\mathcal{HC}(f',g',h')}$. Commutativity of diagram
(\ref{D:comm}) with this choice of $\psi$ and $\tau$ is obvious.
\end{proof}

\begin{proposition} \label{P:coef}
    For any non-zero constants $A, B \text{ and } C$ the flocks
    $\mathcal{F}(f,g,h)$ and $\mathcal{F}(Af,Bg,Ch)$ are
    equivalent. If $A = B = C$ then the corresponding herd spaces
    are herd equivalent (in fact, equal).
\end{proposition}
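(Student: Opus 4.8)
The plan is to exhibit explicit collineations $\psi$ and $\tau$ that make diagram~(\ref{D:comm}) commute, taking advantage of the fact that scaling the coordinate functions individually corresponds to a diagonal change of coordinates on $\mathbf{P}(\mathcal{U})$. First I would define $\psi \colon \mathbf{P}(\mathcal{U}) \to \mathbf{P}(\mathcal{U})$ to be the (invertible, since $A,B,C \neq 0$) linear map sending $\langle a,b,c\rangle$ to $\langle A^{-1}a, B^{-1}b, C^{-1}c\rangle$. Then $\hat\phi_{Af,Bg,Ch}(\psi(\langle a,b,c\rangle)) = \langle (A^{-1}a)(Af) + (B^{-1}b)(Bg) + (C^{-1}c)(Ch)\rangle = \langle af+bg+ch\rangle = \hat\phi_{f,g,h}(\langle a,b,c\rangle)$. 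So with the choice $\tau = id$ on $\mathbf{P}(\mathcal{Z})$ the diagram commutes, provided $\langle f,g,h\rangle = \langle Af, Bg, Ch\rangle$ as subspaces of $\mathcal{Z}$ — which holds because $Af, Bg, Ch$ are scalar multiples of $f,g,h$ and hence span the same subspace $\mathcal{V}$.

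Next I would verify the point-set condition. Since $\tau = id$, Lemma~\ref{L:comm} applies: the condition $\psi(\mathcal{P}_{\mathcal{HC}(f,g,h)}) = \mathcal{P}_{\mathcal{HC}(Af,Bg,Ch)}$ is automatic because $\mathcal{S} \cap \langle f,g,h\rangle = \mathcal{S} \cap \langle Af,Bg,Ch\rangle$ (the two intersections are literally the same set, as $\langle f,g,h\rangle$ and $\langle Af,Bg,Ch\rangle$ are the same subspace). This establishes equivalence of the two herd spaces, hence equivalence of the flocks by definition.

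For the second assertion, when $A = B = C$ the map $\hat\phi_{Af,Ag,Ah}$ sends $\langle a,b,c\rangle$ to $\langle A(af+bg+ch)\rangle = \langle af+bg+ch\rangle$, so $\hat\phi_{Af,Ag,Ah} = \hat\phi_{f,g,h}$ as maps, and consequently $\Gamma(Af,Ag,Ah) = \Gamma(f,g,h)$ as sets of ordered pairs; the herd covers, point sets, and permutation-point intersections coincide verbatim. Taking $\psi = \tau = id$ gives herd equivalence trivially, and in fact equality of the herd spaces.

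I do not anticipate a genuine obstacle here: the proposition is essentially a bookkeeping observation that diagonal rescaling of the generating functions is absorbed by the inverse diagonal rescaling of $\mathbf{P}(\mathcal{U})$. The only point requiring a moment's care is confirming that $\langle f,g,h\rangle$ is genuinely unchanged as a subspace (not merely as an unordered triple) so that $\tau = id$ is a legitimate choice and Lemma~\ref{L:comm} is applicable; this is immediate since each new generator is a nonzero multiple of an old one.
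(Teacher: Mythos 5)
Your proof is correct and follows essentially the same route as the paper: the same diagonal homography $\psi(\langle a,b,c\rangle) = \langle A^{-1}a, B^{-1}b, C^{-1}c\rangle$ with $\tau = id$, the same commutativity computation, and the same appeal to Lemma~\ref{L:comm} to dispose of the point-set condition. Your treatment of the $A=B=C$ case is slightly more explicit than the paper's one-line remark that $\psi$ becomes the identity, but the argument is identical in substance.
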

\begin{proof}
Since $\langle f,g,h \rangle = \langle Af,Bg,Ch \rangle$ we may
take $\tau = id$. Let $\psi$ be the homography defined by
$\psi(\langle x_0,x_1,x_2 \rangle) = \langle
\frac{x_0}{A},\frac{x_1}{B},\frac{x_2}{C} \rangle$. Consider the
point $P = \langle a,b,c \rangle$ in $\mathbf{P}(\mathcal{U})$.
$\tau(\hat\phi_{f,g,h}(P)) = \langle af + bg + ch \rangle$ and
$\hat\phi_{Af,Bg,Ch}(\psi(P)) = \langle\frac{a}{A}(Af) +
\frac{b}{B}(Bg) +\frac{c}{C}(Ch)\rangle = \langle af + bg + ch
\rangle$, so diagram (\ref{D:comm}) commutes. The result now
follows from Lemma \ref{L:comm}. If $A = B = C$ then $\psi$ is
just the identity map of $\mathbf{P}(\mathcal{U})$.
\end{proof}

The last result is a special case of the following:

\begin{proposition} \label{P:perm}
    If $\mathcal{F} =\mathcal{F}(f,g,h)$ is a non-star flock and $\{f',g',h'\}$
    is any basis of $\langle f,g,h \rangle$ then
    $\mathcal{F}(f',g',h')$ is equivalent to $\mathcal{F}$.
\end{proposition}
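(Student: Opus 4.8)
The plan is to exhibit explicit collineations $\psi$ and $\tau$ making diagram (\ref{D:comm}) commute, and then invoke Lemma \ref{L:comm} to dispose of the condition on the point sets of the herd covers. Since $\mathcal{F}(f,g,h)$ is a non-star flock, by the rephrasing given after equation (\ref{E:starherd}) the map $\hat\phi_{f,g,h}$ is a bijection; in particular $\operatorname{rank}(\mathcal{V}) = 3$, so $\{f,g,h\}$ is a basis of $\mathcal{V} = \langle f,g,h\rangle$ (here using the point of $\mathbf{P}(\mathcal{Z})$ notation loosely for the plane it determines) and $\{f',g',h'\}$ is another basis of the same $3$-dimensional space.

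First I would set up the change-of-basis matrix. Write $f' = \alpha_1 f + \alpha_2 g + \alpha_3 h$, $g' = \beta_1 f + \beta_2 g + \beta_3 h$, $h' = \gamma_1 f + \gamma_2 g + \gamma_3 h$, and let $M$ be the invertible $3\times 3$ matrix over $GF(q)$ with these rows. Because $\langle f,g,h\rangle = \langle f',g',h'\rangle$ as subspaces of $\mathcal{Z}$, the collineation $\tau$ can be taken to be the identity on $\mathbf{P}(\mathcal{Z})$ — or more precisely, since the definition of equivalence only requires $\tau$ to be a collineation of $\mathbf{P}(\mathcal{Z})$ sending the point $\langle f,g,h\rangle$ to $\langle f',g',h'\rangle$ and these are literally the same point, we may take $\tau = \mathrm{id}$. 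Then I would define $\psi$ on $\mathbf{P}(\mathcal{U})$ to be the homography whose matrix is $M^{\mathrm{T}}$ (transpose), i.e. $\psi(\langle a,b,c\rangle) = \langle (a,b,c)M \rangle$ acting on row vectors appropriately. The key computation is then to check that for $P = \langle a,b,c\rangle$,
\[
\hat\phi_{f',g',h'}(\psi(P)) = \hat\phi_{f,g,h}(P) = \langle af + bg + ch\rangle,
\]
which amounts to the identity $(a,b,c)M \cdot (f',g',h')^{\mathrm{T}} = (a,b,c)(f,g,h)^{\mathrm{T}}$, and this holds precisely because $M (f,g,h)^{\mathrm{T}} = (f',g',h')^{\mathrm{T}}$ by definition of $M$ — so it is a one-line matrix identity once the bookkeeping of row versus column vectors is fixed.

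Once commutativity of (\ref{D:comm}) is established with $\tau = \mathrm{id}$, the final clause of Lemma \ref{L:comm} applies directly: when $\tau = \mathrm{id}$ the condition $\psi(\mathcal{P}_{\mathcal{HC}(f,g,h)}) = \mathcal{P}_{\mathcal{HC}(f',g',h')}$ is automatic, so $\Gamma(f,g,h)$ and $\Gamma(f',g',h')$ are equivalent herd spaces, whence $\mathcal{F}(f',g',h')$ is equivalent to $\mathcal{F}$ by definition. The main obstacle I anticipate is purely notational: being careful about whether $\hat\phi$ is most naturally described with the coordinate triple on the left or the right of the coefficient matrix, and consequently whether $\psi$ should carry $M$ or $M^{\mathrm{T}}$ or $M^{-1}$ or $(M^{\mathrm{T}})^{-1}$. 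Getting this wrong still gives a valid homography but breaks the commutativity identity, so I would pin down the convention by first re-deriving the special case $\{f',g',h'\} = \{Af,Bg,Ch\}$ of Proposition \ref{P:coef} from my formula as a sanity check — there $M = \operatorname{diag}(A,B,C)$ and one should recover $\psi(\langle x_0,x_1,x_2\rangle) = \langle x_0/A, x_1/B, x_2/C\rangle$, which fixes the transpose-inverse ambiguity.
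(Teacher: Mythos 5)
Your proof takes essentially the same route as the paper's: set $\tau=\mathrm{id}$ (legitimate since $\langle f,g,h\rangle=\langle f',g',h'\rangle$ is literally the same point of $\mathbf{P}(\mathcal{Z})$), let $\psi$ be the change-of-basis homography on $\mathbf{P}(\mathcal{U})$, verify that diagram (\ref{D:comm}) commutes, and invoke the final clause of Lemma \ref{L:comm} to dispense with the condition on the herd cover point sets. The one slip is that with your conventions $\psi$ must act by $M^{-1}$ rather than $M$ --- the correct identity is $(a,b,c)M^{-1}\cdot(f',g',h')^{\mathrm{T}}=(a,b,c)M^{-1}M(f,g,h)^{\mathrm{T}}=(a,b,c)(f,g,h)^{\mathrm{T}}$ --- which is exactly what your proposed sanity check against the diagonal case of Proposition \ref{P:coef} (where the paper uses $\mathrm{diag}(1/A,1/B,1/C)$) would reveal and repair.
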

\begin{proof}
Since $\langle f,g,h \rangle = \langle f',g',h' \rangle$ we may
take $\tau = id$. Let $\psi$ be the homography of
$\mathbf{P}(\mathcal{U})$ given by the change of basis matrix for
the ordered basis $\{f',g',h'\}$ to the ordered basis $\{f,g,h\}$
acting on points. A straight-forward calculation shows that
diagram (\ref{D:comm}) commutes, and the result follows from
Lemma \ref{L:comm}.
\end{proof}

An analogous result for star flocks will be given in the next
section.

\begin{proposition} \label{P:steq}
    Let $\mathcal{F}$ and $\mathcal{F'}$ be two flocks with the same
    critical cone $\mathcal{K}$ in $PG(3,q)$ whose carrier contains
    at least 3 non-collinear points. If there is a collineation of
    $PG(3,q)$ stabilizing $\mathcal{K}$ and mapping $\mathcal{F}$
    to $\mathcal{F'}$ then $\mathcal{F}$ and
    $\mathcal{F'}$ are strongly equivalent.
\end{proposition}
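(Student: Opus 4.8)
The plan is to translate the geometric collineation of $PG(3,q)$ directly into the pair $(\psi,\tau)$ required by diagram~(\ref{D:comm}), using the fact that a collineation fixing the vertex $V$ and stabilizing the critical cone induces a collineation of the carrier plane $x_3=0$. So first I would let $\sigma$ be the given collineation of $PG(3,q)$ stabilizing $\mathcal{K}$ with $\sigma(\mathcal{F}) = \mathcal{F}'$. Since $\mathcal{K}$ is a cone with vertex $V = \langle 0,0,0,1\rangle$ and $\sigma$ stabilizes it, $\sigma$ must fix $V$ (the vertex is the unique point of $\mathcal{K}$ lying on all generators, assuming the carrier has at least $3$ non-collinear points so the cone is not flat and $V$ is intrinsically distinguished). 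Consequently $\sigma$ stabilizes the set of lines through $V$, hence induces a collineation $\psi$ of the quotient, which we identify with the plane $\mathbf{P}(\mathcal{U})$ via $\langle a,b,c\rangle \leftrightarrow \langle a,b,c,0\rangle$. Because $\sigma$ stabilizes $\mathcal{K}$, the induced $\psi$ stabilizes the carrier of $\mathcal{K}$, which by the General Herd Theorem is exactly $\mathbf{P}_{\mathcal{HC}}$; since $\mathcal{F}$ and $\mathcal{F}'$ have the \emph{same} critical cone, $\psi(\mathbf{P}_{\mathcal{HC}(f,g,h)}) = \mathbf{P}_{\mathcal{HC}(f',g',h')}$ and in fact these point sets are equal, which is the extra condition needed for \emph{strong} equivalence.

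Next I would produce $\tau$. The key observation is that $\sigma$ sends the generator line $VP$ to the generator line $V\psi(P)$, and it sends the plane $\pi_t$ of $\mathcal{F}$ to some plane $\pi'_{s}$ of $\mathcal{F}'$; tracking the intersection points $\pi_t \cap VP$ through $\sigma$ shows that the function $F_P$ attached to $P$ by $\mathcal{F}$ (in the notation of Section~4) is carried, up to the reparameterization $t \mapsto s$ of the flock planes and up to a scalar coming from the choice of coordinate representatives, to the function $F'_{\psi(P)}$ attached to $\psi(P)$ by $\mathcal{F}'$. In projective terms this says $\langle F_P \rangle$ and $\langle F'_{\psi(P)} \rangle$ correspond under a single semilinear map of $\mathcal{Z}$ induced by the field automorphism part of $\sigma$ together with the permutation of $GF(q)$ coming from the flock reparameterization (this permutation fixes $0$, so it does preserve the subspace $\mathcal{Z}$ and acts linearly on it in the appropriate sense). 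Restricting this induced collineation of $\mathbf{P}(\mathcal{Z})$ and checking that it carries $\langle f,g,h\rangle$ to $\langle f',g',h'\rangle$ — which follows because it carries each $\langle F_P\rangle$ to $\langle F'_{\psi(P)}\rangle$ and these generate the respective planes — gives the desired $\tau$ with $\tau(\langle f,g,h\rangle) = \langle f',g',h'\rangle$, and commutativity of~(\ref{D:comm}) is then immediate from the construction since both routes around the square compute $\langle F'_{\psi(P)}\rangle$ from $P$.

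The main obstacle is the bookkeeping in the middle step: one must be careful that the induced map on $\mathcal{Z}$ is genuinely well-defined as a collineation of $\mathbf{P}(\mathcal{Z})$ and does not depend on the choices of coordinate representatives along the generators. The cleanest way around this is to work projectively throughout — define $\tau$ on the points $\langle F_P\rangle$ for $P$ ranging over $\mathbf{P}_{\mathcal{HC}}$ (which by hypothesis spans a plane, since it contains $3$ non-collinear points mapping to $3$ independent functions $f,g,h$), verify it agrees with the semilinear action forced by $\sigma$, and then extend by semilinearity; the fact that $\psi$ and the field-automorphism-plus-reparameterization data are compatible is exactly what makes the extension a bona fide collineation. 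Once $\tau$ is in hand, Lemma~\ref{L:comm} is not even needed — the equality $\mathbf{P}_{\mathcal{HC}(f,g,h)} = \mathbf{P}_{\mathcal{HC}(f',g',h')}$ was already established from the coincidence of critical cones — so we directly conclude that $\Gamma(f,g,h)$ and $\Gamma(f',g',h')$ are strongly equivalent, hence so are the flocks.
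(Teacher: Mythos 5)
Your strategy is genuinely different from the paper's, and it is far more laborious than it needs to be; moreover, the step you yourself flag as ``the main obstacle'' --- the construction of $\tau$ --- is exactly where your sketch has a gap. The paper's proof sidesteps all of the geometry you develop: it takes $\psi = id$ and lets $\tau$ be the collineation of $\mathbf{P}(\mathcal{Z})$ induced by any vector space automorphism extending $f \mapsto f'$, $g \mapsto g'$, $h \mapsto h'$ (possible in the non-star case because $\{f,g,h\}$ and $\{f',g',h'\}$ are each linearly independent and can be completed to bases of $\mathcal{Z}$). Commutativity of (\ref{D:comm}) is then immediate, and since the two flocks have the same critical cone the point sets of their herd covers coincide, which with $\psi = id$ supplies both the equivalence condition and the extra condition for \emph{strong} equivalence. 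Note that the definitions of equivalence and strong equivalence impose no constraint on how $\tau$ interacts with $\mathcal{S}$ (that is reserved for herd equivalence), which is why such a short argument suffices; the collineation $\sigma$ is barely used.

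Concerning the gap in your route: you assert that the assignment $\langle F_P\rangle \mapsto \langle F'_{\psi(P)}\rangle$ is induced by a single semilinear map of $\mathcal{Z}$ built from the field automorphism of $\sigma$ and the reparameterization permutation, and you justify its preserving $\mathcal{Z}$ by saying that this permutation fixes $0$. That is not justified: $\sigma$ carries the plane $x_3=0$ (the plane $\pi_0$ of $\mathcal{F}$) to \emph{some} plane of $\mathcal{F}'$, not necessarily to $x_3=0$, so the induced permutation $p$ of parameters can move $0$, and $G \mapsto (G\circ p^{-1})^{\theta}$ then fails to preserve the condition $G(0)=0$. The claim can be repaired --- writing $\sigma$ on points as $x \mapsto Ax^{\theta}$ with $A$ fixing $V$ (your fixing of $V$ is correct, though the clean reason is that $V$ is the unique point of $\mathcal{K}$ lying in no plane of the flock, a property $\sigma$ preserves; uniqueness of the vertex of a non-flat cone can fail, e.g.\ for a carrier that is a union of concurrent lines), one computes
\begin{equation*}
F'_{\psi(P)}(s) \;=\; \alpha\, F_P\bigl(p^{-1}(s)\bigr)^{\theta} \;-\; \alpha\, F_P\bigl(p^{-1}(0)\bigr)^{\theta}
\end{equation*}
for a fixed nonzero scalar $\alpha$, and the map $G \mapsto \alpha (G\circ p^{-1})^{\theta} - \alpha G(p^{-1}(0))^{\theta}$ is a $\theta$-semilinear bijection of $\mathcal{Z}$ carrying $\langle f,g,h\rangle$ onto $\langle f',g',h'\rangle$ and making (\ref{D:comm}) commute --- but this computation is the heart of your argument and is missing from the proposal as written. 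A smaller slip: your parenthetical that three non-collinear points of $\mathbf{P}_{\mathcal{HC}}$ map to three independent functions silently assumes the flock is non-star, which the hypotheses of the proposition do not guarantee (the paper, to be fair, also defers the star case).
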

\begin{proof}
We will provide a proof only for the case that $\mathcal{F}$ and
$\mathcal{F'}$ are non-star flocks and leave the star flock case
to the reader.

 Let $\mathcal{F} =\mathcal{F}(f,g,h)$ and $\mathcal{F}'
 =\mathcal{F}(f',g',h')$ and suppose that $\sigma$ is a
collineation of $PG(3,q)$ stabilizing $\mathcal{K}$ and mapping
$\mathcal{F}$ to $\mathcal{F'}$. $\sigma$ induces a map that maps
$f \mapsto f',\; g \mapsto g'$ and $h \mapsto h'$. Since
$\mathcal{F}$ is a non-star flock, $f, g$ and $h$ are linearly
independent vectors of $\mathcal{Z}$ and this set can be extended
to an ordered basis $\mathcal{B} = \{\mu_0 = f, \mu_1 = g, \mu_2
= h, \mu_3, \ldots, \mu_{q-2}\}$ of $\mathcal{Z}$. Similarly, we
can construct an ordered basis $\mathcal{B}' = \{\mu_0' = f',
\mu_1' = g', \mu_2' = h', \mu_3', \ldots, \mu_{q-2}'\}$. The
linear extension of the map given by $\mu_i \mapsto \mu_i',\; 0
\le i \le q-2$, is a vector space automorphism. If we let $\tau$
be the collineation of $\mathbf{P}(\mathcal{Z})$ induced by this
automorphism, it is clear that $\tau(\langle f,g,h \rangle) =
\langle f',g',h' \rangle$. Since $\tau(\langle af+bg+ch \rangle) =
\langle af'+bg'+ch' \rangle$ for all $a,b,c \in GF(q)$, if we
take $\psi = id$ then diagram (\ref{D:comm}) commutes. As
$\mathcal{F}$ and $\mathcal{F'}$ have the same critical cone, the
point sets of the herd covers of their herd spaces are identical
and so, preserved by $\psi$. Thus, $\mathcal{F}$ and
$\mathcal{F'}$ are strongly equivalent.
\end{proof}

Let $\mathcal{F} = \{\pi_t \mid t \in GF(q)\}$ be a flock of
planes. A \emph{reparameterization} of $\mathcal{F}$ is a
reassignment of the elements of $GF(q)$ to the planes of
$\mathcal{F}$, with $0$ still assigned to the plane $x_3=0$. If
after a reparameterization we have $\mathcal{F} = \{\pi_s \mid s
\in GF(q)\}$ then there exists a permutation $r$ of $GF(q)$ so
that $\pi_t = \pi_{r(s) - r(0)}$. In terms of coordinate
functions, if $\mathcal{F} = \mathcal{F}(f,g,h)$ then after this
reparameterization we would have $\mathcal{F} =
\mathcal{F}(f',g',h')$ where $f'(s) = f(r(s)-r(0))$, $g'(s) =
g(r(s)-r(0))$ and $h'(s) = h(r(s)-r(0))$. By changing the
parameterization we obtain a different set of coordinate
functions, but we haven't changed the planes, so the
corresponding herd spaces should be equivalent in a very strong
sense. This is the content of the next proposition.

\begin{proposition} \label{P:reindex}
    The herd space of a flock and the herd space of any
    reparameterization of that flock are herd equivalent.
\end{proposition}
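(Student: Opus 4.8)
The plan is to exhibit explicit collineations $\psi$ and $\tau$ witnessing herd equivalence. Write $\mathcal{F} = \mathcal{F}(f,g,h)$ and let $\mathcal{F}' = \mathcal{F}(f',g',h')$ be the reparameterization, so that, as in the paragraph preceding the proposition, there is a permutation $r$ of $GF(q)$ fixing $0$ (after the shift) with $f'(s) = f(r(s)-r(0))$, and similarly for $g'$ and $h'$. The key observation is that reparameterization acts on $\mathcal{Z}$ by precomposition with a fixed permutation $\sigma \in \mathcal{Z}$, namely $\sigma(s) = r(s) - r(0)$, which is a permutation polynomial with $\sigma(0)=0$; thus $f' = f \circ \sigma$, $g' = g \circ \sigma$, $h' = h \circ \sigma$. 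First I would check that the map $\Theta \colon \mathcal{Z} \to \mathcal{Z}$ defined by $\Theta(F) = F \circ \sigma$ is a vector-space automorphism of $\mathcal{Z}$: it is clearly linear since $(aF+bG)\circ\sigma = a(F\circ\sigma)+b(G\circ\sigma)$, it sends $\mathcal{Z}$ into $\mathcal{Z}$ because $\sigma(0)=0$ forces $(F\circ\sigma)(0)=F(0)=0$, and it is invertible with inverse $F \mapsto F \circ \sigma^{-1}$. Let $\tau$ be the collineation of $\mathbf{P}(\mathcal{Z})$ induced by $\Theta$.

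Next I would take $\psi = \mathrm{id}$ on $\mathbf{P}(\mathcal{U})$ and verify that diagram~(\ref{D:comm}) commutes. For a point $P = \langle a,b,c\rangle$ we have $\tau(\hat\phi_{f,g,h}(P)) = \langle (af+bg+ch)\circ\sigma \rangle = \langle a(f\circ\sigma) + b(g\circ\sigma) + c(h\circ\sigma)\rangle = \langle af'+bg'+ch'\rangle = \hat\phi_{f',g',h'}(\psi(P))$, using linearity of composition; in particular $\tau(\langle f,g,h\rangle) = \langle f',g',h'\rangle$, so $\tau$ has the required property. Since the point sets of the herd covers are attached to the flocks via their critical cones, and the critical cone of a flock of planes depends only on the planes (not on how they are indexed), $\mathcal{F}$ and $\mathcal{F}'$ have the same critical cone; hence $\mathbf{P}_{\mathcal{HC}(f,g,h)} = \mathbf{P}_{\mathcal{HC}(f',g',h')}$ and $\psi$ trivially preserves this set. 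This already gives strong equivalence.

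Finally, for herd equivalence I must check that $\tau$ preserves $\mathcal{S}$. This is where the essential point lies, but it is short: $\tau$ is induced by precomposition with the permutation $\sigma$, and a function $F \in \mathcal{Z}$ is a permutation of $GF(q)$ if and only if $F \circ \sigma$ is a permutation of $GF(q)$, since $\sigma$ is itself a bijection. Thus $\Theta$ maps permutation functions to permutation functions bijectively, so $\tau(\mathcal{S}) = \mathcal{S}$, and in particular $\tau(\mathcal{S} \cap \langle f,g,h\rangle) = \mathcal{S} \cap \langle f',g',h'\rangle$. By the definition of herd equivalence (or by Lemma~\ref{L:comm} together with the fact that $\psi$ already matches the point sets), $\Gamma(f,g,h)$ and $\Gamma(f',g',h')$ are herd equivalent. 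The only mild subtlety to watch is the bookkeeping of the shift by $r(0)$ so that $\sigma(0)=0$, which is needed both for $\Theta$ to preserve $\mathcal{Z}$ and for the reparameterization to keep $0$ assigned to $\pi_0$; everything else is a direct consequence of composition being linear and bijections composing to bijections.
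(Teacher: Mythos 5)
Your proof is correct and follows essentially the same route as the paper: both take $\tau$ induced by precomposition with the permutation $p(s)=r(s)-r(0)$, take $\psi=\mathrm{id}$, verify commutativity of diagram~(\ref{D:comm}), and note that $\tau(\mathcal{S})=\mathcal{S}$ because composing with a bijection preserves the property of being a permutation. Your extra checks (linearity of $\Theta$, equality of the herd-cover point sets via the unchanged critical cone) are details the paper leaves implicit.
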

\begin{proof}
Let $\mathcal{F}(f',g',h')$ be a reparameterization of
$\mathcal{F}(f,g,h)$ by the permutation $r$ of $GF(q)$. That is,
$f'(s) = f(r(s)-r(0))$, $g'(s) = g(r(s)-r(0))$ and $h'(s) =
h(r(s)-r(0))$. Now, $p(s) = r(s) - r(0)$ for $s \in GF(q)$ is also
a permutation of $GF(q)$. Define $\tau \colon
\mathbf{P}(\mathcal{Z}) \to \mathbf{P}(\mathcal{Z})$ by
$\tau(\langle F \rangle) = \langle F\circ p \rangle$. This is
easily seen to be well defined and obviously, $F\circ p(0) = 0$.
$\tau$ is a collineation of $\mathbf{P}(\mathcal{Z})$ and since
the composition of permutations is a permutation,
$\tau(\mathcal{S}) = \mathcal{S}$. Clearly, $\tau(\langle f,g,h
\rangle) = \langle f',g',h' \rangle$. Let $\psi = id$. Since
$\tau(\langle af(t) + bg(t) +ch(t)\rangle) = \langle af(p(s)) +
bg(p(s)) + ch(p(s))\rangle = \hat\phi_{f',g',h'}(\langle a,b,c
\rangle)$, diagram (\ref{D:comm}) commutes.
\end{proof}

In order to compare our results with the extant literature
concerning flocks of quadratic (and other) cones, the following
definition will be useful. Let $\mathcal{C}$ be a subset of
$\mathcal{P}_{\mathcal{HC}(f,g,h)}$ of the herd space
$\Gamma(f,g,h)$. The herd spaces $\Gamma(f,g,h) \text{ and }
\Gamma(f',g',h')$ are defined to be
$\mathcal{C}-$\emph{equivalent} if in the definition of
equivalent herd spaces the condition
$\psi(\mathcal{P}_{\mathcal{HC}(f,g,h)}) =
\mathcal{P}_{\mathcal{HC}(f',g',h')}$ is replaced by
$\psi(\mathcal{C}) \subseteq
\mathcal{P}_{\mathcal{HC}(f',g',h')}$. These herd spaces are said
to be \emph{strongly $\mathcal{C}-$equivalent} if they are
$\mathcal{C}-$equivalent and $\psi(\mathcal{C}) = \mathcal{C}$. We
refer to two flocks as being \emph{(strongly)
$\mathcal{C}-$equivalent} if their herd spaces are (strongly)
$\mathcal{C}-$equivalent. Note that equivalent herd spaces are
$\mathcal{C}-$equivalent for any appropriate set $\mathcal{C}$,
but the converse need not be true. Consider the following example
of $\mathcal{C}-$equivalence.

\begin{example}
The flocks $\mathcal{F}(t, 5t^3,5t^5)$ and
$\mathcal{F}(t+7t^3+3t^5, 5t^3+14t^5, 5t^5)$ of $PG(3,17)$ are
equivalent, but not strongly equivalent flocks. The equivalence
is given by the collineations $\psi(x_0,x_1,x_2) = (x_0, 2x_0+x_1,
4x_0+4x_1+x_2)$ and $\tau = id$. $\mathcal{P}_{\mathcal{HC}(t,
5t^3,5t^5)}$ consists of the conic $\mathcal{C}\colon x_0x_2 =
x_1^2$ together with the point $(0,1,0)$, while
$\mathcal{P}_{\mathcal{HC}(t+7t^3+3t^5, 5t^3+14t^5, 5t^5)}$
consists of the conic $\mathcal{C}\colon x_0x_2 = x_1^2$ together
with the point $(0,13,1)$. Since these point sets are not equal,
the flocks are not strongly equivalent. However, since $\psi$
stabilizes the conic $\mathcal{C}$, these two flocks are strongly
$\mathcal{C}$-equivalent.
\end{example}

\begin{proposition}
    Let $\mathcal{C}$ be a set of points in the plane $x_3 = 0$ of
    $PG(3,q)$ whose automorphism group acts transitively on the
    lines of $x_3 = 0$ which do not meet $\mathcal{C}$. Then, all
    linear flocks of the cone of $PG(3,q)$ with carrier
    $\mathcal{C}$ are strongly $\mathcal{C}-$equivalent.
\end{proposition}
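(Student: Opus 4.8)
The plan is to run essentially the argument of Proposition~\ref{P:linear3}, but to harvest the collineation $\psi$ of $\mathbf{P}(\mathcal{U})$ from the automorphism group of $\mathcal{C}$ rather than from the full collineation group; the transitivity hypothesis is exactly what makes this possible. So take two linear flocks $\mathcal{F}(f,g,h)$ and $\mathcal{F}(f',g',h')$ of the cone with carrier $\mathcal{C}$, and aim to show they are strongly $\mathcal{C}$-equivalent.

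First I would pin down the geometry of a linear flock of this cone. Since $\mathcal{F}(f,g,h)$ is a flock, its herd space is non-degenerate, and being a linear flock its herd space is a linear herd space; by the structure analysis preceding Proposition~\ref{P:lhs} the kernel of $\Gamma(f,g,h)$ is a line $\ell$ of $\mathbf{P}(\mathcal{U})$, and Proposition~\ref{P:lhs} (non-degeneracy) forces $\mathbf{P}_{\mathcal{HC}(f,g,h)}$ to be precisely the complement of $\ell$. Likewise $\Gamma(f',g',h')$ has kernel a line $\ell'$ with $\mathbf{P}_{\mathcal{HC}(f',g',h')}$ its complement. Because each flock is a flock of the cone with carrier $\mathcal{C}$, we have $\mathcal{C}\subseteq\mathbf{P}_{\mathcal{HC}(f,g,h)}$ and $\mathcal{C}\subseteq\mathbf{P}_{\mathcal{HC}(f',g',h')}$, so $\ell$ and $\ell'$ are both lines of $x_3=0$ that do not meet $\mathcal{C}$.

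Now I would invoke the hypothesis: the automorphism group of $\mathcal{C}$ is transitive on lines of $x_3=0$ missing $\mathcal{C}$, so there is a collineation $\psi$ of $x_3=0$ (identified with $\mathbf{P}(\mathcal{U})$) that stabilizes $\mathcal{C}$ and sends $\ell$ to $\ell'$. Then $\psi(\mathcal{C})=\mathcal{C}$ (the ``strong'' requirement), and since $\mathcal{C}\subseteq\mathbf{P}_{\mathcal{HC}(f',g',h')}$ we also get $\psi(\mathcal{C})\subseteq\mathbf{P}_{\mathcal{HC}(f',g',h')}$; moreover $\psi$ carries the complement of $\ell$ onto the complement of $\ell'$, i.e.\ $\psi(\mathbf{P}_{\mathcal{HC}(f,g,h)})=\mathbf{P}_{\mathcal{HC}(f',g',h')}$. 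For $\tau$, note that in the linear case $\langle f,g,h\rangle$ and $\langle f',g',h'\rangle$ are single points of $\mathbf{P}(\mathcal{Z})$, so by transitivity of $\mathrm{Aut}(\mathbf{P}(\mathcal{Z}))$ on points there is a collineation $\tau$ of $\mathbf{P}(\mathcal{Z})$ with $\tau(\langle f,g,h\rangle)=\langle f',g',h'\rangle$; extend it by $\tau(\langle 0\rangle)=\langle 0\rangle$. Commutativity of diagram~(\ref{D:comm}) is then checked exactly as in Proposition~\ref{P:linear3}: for $P$ on $\ell$ both paths yield $\langle 0\rangle$ (here $\psi(\ell)=\ell'$ is used), and for $P$ off $\ell$ both paths yield the point $\langle f',g',h'\rangle$. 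Hence the two herd spaces, and so the two flocks, are strongly $\mathcal{C}$-equivalent.

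The only step that needs genuine thought — the ``main obstacle'' — is the first one: recognizing that for a linear flock of the cone with carrier $\mathcal{C}$ the kernel line is forced to be one of the lines missing $\mathcal{C}$, and that this choice of line is the \emph{only} freedom present in a linear herd space, so that the stated transitivity property is precisely the condition under which a single $\psi$ can simultaneously stabilize $\mathcal{C}$ and match up the two kernels. After that the construction of $\tau$ and the verification of the square are routine, being identical to Proposition~\ref{P:linear3}. (The degenerate case $\mathcal{C}=\emptyset$ is subsumed: then $\mathrm{Aut}(\mathcal{C})$ is the full collineation group, which is transitive on all lines, and the inclusion conditions in the definition of strong $\mathcal{C}$-equivalence are vacuous.)
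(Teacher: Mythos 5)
Your proposal is correct and follows essentially the same route as the paper's proof: use transitivity of $\mathrm{Aut}(\mathbf{P}(\mathcal{Z}))$ on points to get $\tau$, observe that the kernel lines $\ell,\ell'$ must miss $\mathcal{C}$ (since $\mathcal{C}$ lies in the point sets of the herd covers, which are the complements of those lines), and then use the hypothesized transitivity of $\mathrm{Aut}(\mathcal{C})$ on such lines to obtain $\psi$ with $\psi(\ell)=\ell'$ and $\psi(\mathcal{C})=\mathcal{C}$. You spell out the justification that $\ell$ and $\ell'$ avoid $\mathcal{C}$ in more detail than the paper does, but the argument is the same.
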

\begin{proof}
Let $\mathcal{F}(f,g,h)$ and $\mathcal{F}(f',g',h')$ be linear
flocks of the cone with carrier $\mathcal{C}$. $\langle f,g,h
\rangle$ and $\langle f',g',h' \rangle$ are points of
$\mathbf{P}(\mathcal{Z})$ and since the automorphism group of
$\mathbf{P}(\mathcal{Z})$ acts transitively on its points, we can
find a $\tau$ in this group with $\tau( \langle f,g,h \rangle ) =
\langle f',g',h' \rangle$. We identify the plane $x_3 = 0$ with
$\mathbf{P}(\mathcal{U})$ in the usual manner. The kernels of
$\Gamma(f,g,h)$ and $\Gamma(f',g',h')$ are lines $\ell$ and
$\ell'$ respectively, of $\mathbf{P}(\mathcal{U})$ which do not
intersect $\mathcal{C}$. By the hypothesis we can find a $\psi$
in the automorphism group of $\mathcal{C}$ so that $\psi(\ell) =
\ell'$. Commutativity of diagram (\ref{D:comm}) is obvious. Since
$\psi(\mathcal{C}) = \mathcal{C} \subseteq
\mathcal{P}_{\mathcal{HC}(f',g',h')}$, we see that
$\mathcal{F}(f,g,h)$ and $\mathcal{F}(f',g',h')$ are strongly
$\mathcal{C}-$equivalent.
\end{proof}

In most of the applications, $\mathcal{C}$ will be a conic or
other oval of $x_3 = 0$. When $\mathcal{C}$ is a conic, strong
$\mathcal{C}-$equivalence coincides with the concept of flock
equivalence found in the literature on the quadratic cone case.
The proposition above is thus a generalization of the well known
result that all linear flocks of quadratic cones are
``equivalent'', while Proposition \ref{P:linear3} is not. From a geometrical point of view, all linear flocks \textit{should} be equivalent since they are structurally the same, but this will not be the case for non-quadratic cones, in general, if the notion of equivalence requires that the cone be stabilized. This consideration has led us to the more general notion of equivalence that we have adopted.

\section{Star Flocks}

We can easily characterize star flocks in terms of their herd
spaces. A more detailed examination of star flocks can be found
in \cite{WEC2}.

\begin{proposition} \label{P:star1}
   A flock $\mathcal{F}$ is a star flock if, and only if, its herd
   space contains the constant function. A herd space is a star herd space if and only if
   its associated flock is a star flock.
\end{proposition}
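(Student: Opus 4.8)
The plan is to unwind the definitions and connect three notions: being a star flock (equivalently, a star herd space, i.e., $\hat\phi$ is not a bijection, by the rephrased definition given just before Proposition~\ref{P:pshs}), the herd space containing the constant function $\langle 0\rangle$, and the rank of $\mathcal{V}=\mathcal{V}(f,g,h)$. The key observation is that $\langle 0\rangle$ is a point of $\mathbf{P}(\mathcal{V})$ (equivalently, $\mathbf{P}(\mathcal{V})$ contains the constant function as a ``function of the herd space'' in the abused sense) precisely when $af+bg+ch=0$ for some $(a,b,c)\neq(0,0,0)$, i.e., precisely when $f,g,h$ are linearly dependent over $GF(q)$, i.e., precisely when $\mathrm{rank}(\mathcal{V})\le 2$.

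First I would prove the second sentence, which is really the crux. By the analysis already carried out in Section~2 (the linear herd space paragraph) and Section~3 (the proper star herd space paragraph), a herd space is a star herd space iff $\mathrm{rank}(\mathcal{V})\in\{1,2\}$ iff $f,g,h$ are linearly dependent. On the one hand, if the herd space is a star herd space, then by those paragraphs the kernel of $\hat\phi$ is nonempty (a line when $\mathrm{rank}=1$, a point when $\mathrm{rank}=2$), so there is a point $\langle a,b,c\rangle$ with $af+bg+ch$ the constant function $\langle 0\rangle$; hence the ordered pair $(\langle a,b,c\rangle,\langle 0\rangle)$ lies in $\Gamma(f,g,h)$, so the herd space ``contains'' the constant function. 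Conversely, if the herd space contains the constant function, then $af+bg+ch=0$ for some nonzero $(a,b,c)$, so $\hat\phi$ has nontrivial kernel, hence is not a bijection, hence (by the rephrased definition) the herd space is a star herd space. This also shows the contrapositive cleanly: a non-star herd space is a bijection onto the projective plane $\mathbf{P}(\mathcal{V})$, and $\langle 0\rangle\notin\mathbf{P}(\mathcal{V})$ since $\mathbf{P}(\mathcal{V})$ consists of classes of nonzero vectors.

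Then the first sentence follows from the bridge established at the start of Section~4: a flock $\mathcal{F}(f,g,h)$ gives rise to the herd space $\Gamma(f,g,h)$, and by Proposition~\ref{P:perm} together with the running definition, $\mathcal{F}$ is a star flock exactly when $f,g,h$ are linearly dependent (equivalently $\mathrm{rank}(\mathcal{V})\le 2$) — indeed the terms ``linear flock'' and ``proper star flock'' were defined via $\mathrm{rank}(\mathcal{V})=1$ and $\mathrm{rank}(\mathcal{V})=2$ respectively, with ``star flock'' meaning either. So $\mathcal{F}$ is a star flock iff $\Gamma(f,g,h)$ is a star herd space iff (by the second sentence, just proved) $\Gamma(f,g,h)$ contains the constant function, which is what the first sentence asserts.

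I do not anticipate a serious obstacle here; the whole proposition is essentially a bookkeeping exercise that collates the case analysis of Sections~2 and~3 under the single heading ``$f,g,h$ linearly dependent $\Leftrightarrow$ $\langle 0\rangle$ is in the image $\Leftrightarrow$ $\hat\phi$ not bijective.'' The one point requiring a little care is the meaning of ``contains the constant function'': one must note that $\langle 0\rangle$ appears as a \emph{second} coordinate of an ordered pair of the herd space (never as a point of $\mathbf{P}(\mathcal{V})$ in the strict sense, since $\mathbf{P}(\mathcal{V})$ contains only classes of nonzero functions), so ``contains the constant function'' should be read as ``some pair $(P,\langle 0\rangle)$ lies in $\Gamma(f,g,h)$,'' consistent with the explicit descriptions of linear and proper star herd spaces given earlier. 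Once that reading is fixed, both implications are immediate from the material already in hand.
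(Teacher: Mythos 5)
Your handling of the herd-space side is fine and matches the paper: $\Gamma(f,g,h)$ contains a pair $(P,\langle 0\rangle)$ iff $\hat\phi$ has a nontrivial kernel iff $f,g,h$ are linearly dependent iff the herd space is a star herd space (by the rephrased definition ``not a bijection''). The gap is on the flock side. You take ``star flock'' to be \emph{defined} by the algebraic condition $\mathrm{rank}(\mathcal{V})\le 2$, but that is not the paper's usage: a star flock is a flock of planes all of which pass through a common point (and a linear flock is one whose planes share a common line) --- this is exactly how the term is used in the paper's own proof (``Let $\mathcal{F}$ be a star flock with the point $Q$ common to all the planes'') and in the proof of Proposition \ref{P:linear1}. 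Under your reading both sentences of the proposition collapse to near-tautologies; the actual content of the proposition is precisely the bridge between the geometric condition on the planes and the algebraic condition on the herd space, and your argument never builds that bridge.

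The missing step is supplied by the Section 4 identification: for a point $Q=\langle a,b,c,0\rangle$ of the plane $x_3=0$, the function $F_Q(t)=af(t)+bg(t)+ch(t)$ records the last coordinate of $\pi_t\cap VQ$, so $F_Q\equiv 0$ holds precisely when every plane $\pi_t$ meets the line $VQ$ at $Q$ itself, i.e.\ precisely when $Q$ lies in every plane of the flock. (One also needs the small observation that any point common to all the planes must lie in $\pi_0\colon x_3=0$, since $\pi_0$ is one of the planes, so it suffices to consider points of $\mathbf{P}(\mathcal{U})$.) With that in hand, ``the herd space contains the constant function'' $\Leftrightarrow$ ``some point lies in all planes of $\mathcal{F}$'' $\Leftrightarrow$ ``$\mathcal{F}$ is a star flock,'' and your kernel argument then correctly yields the second sentence. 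The proof is repairable, but as written it begs the question on the first sentence.
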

\begin{proof}
Let $\mathcal{F}$ be a star flock with the point $Q$ common to
all the planes of $\mathcal{F}$. Since $x_3=0$ is in the flock,
$Q$ is in this plane. Clearly, $f_Q(t) = 0, \forall t \in GF(q)$,
so the herd space of $\mathcal{F}$ contains a constant function
(i.e., $(Q,\langle 0 \rangle)$). On the other hand, if the herd
space of $\mathcal{F}$ contains a constant function, it must be
the zero function. The point associated with the zero function
lies in all planes of the flock, and so, the flock is a star
flock. Since the kernel of $\hat\phi$ is not trivial, the herd
space in this case is a star herd space.
\end{proof}

\begin{proposition} \label{P:linear1}
   A flock $\mathcal{F}$ is a linear flock if and only if its
   herd space contains at least two constant functions. In this
   case, the herd space is a linear herd space.
\end{proposition}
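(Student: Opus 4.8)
The plan is to reduce everything to the kernel of $\hat\phi$. First I would observe that since every element of $\mathcal{Z}$ satisfies $F(0)=0$, the only constant function in $\mathcal{Z}$ is the zero function; hence a representative of $\langle af+bg+ch\rangle$ is a constant function precisely when $af+bg+ch\equiv 0$, i.e. when $\langle a,b,c\rangle\in\ker\hat\phi$. So "the herd space $\Gamma(f,g,h)$ contains at least two constant functions" means exactly that $\ker\hat\phi$ contains at least two distinct points of $\mathbf{P}(\mathcal{U})$. Since $\ker\phi$ is a vector subspace of $\mathcal{U}$ and $\phi$ is non-trivial, $\ker\hat\phi$ is empty, a single point, or a line, and it has two or more points exactly when it is a line, which happens exactly when $\dim\ker\phi=2$, i.e. $rank(\mathcal{V})=1$. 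By the definition of a linear herd space this already yields the final sentence of the proposition and shows that "$\Gamma(f,g,h)$ contains at least two constant functions" is equivalent to "$\Gamma(f,g,h)$ is a linear herd space".

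Next I would translate the kernel condition into a statement about the planes of the flock. Using $\pi_t\colon f(t)x_0+g(t)x_1+h(t)x_2-x_3=0$ together with $f(0)=g(0)=h(0)=0$, the common intersection $\bigcap_{t\in GF(q)}\pi_t$ lies in the plane $x_3=0$ (take $t=0$) and equals $\{\langle a,b,c,0\rangle : af(t)+bg(t)+ch(t)=0\text{ for all }t\}$, which under the identification of $x_3=0$ with $\mathbf{P}(\mathcal{U})$ is precisely $\ker\hat\phi$. Therefore $\ker\hat\phi$ is a line of $\mathbf{P}(\mathcal{U})$ if and only if all the planes of $\mathcal{F}$ share a common line of $PG(3,q)$, i.e. if and only if $\mathcal{F}$ is a linear flock. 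Combining this with the first paragraph gives the stated equivalence, and the linear-herd-space assertion has already been obtained.

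The argument is routine; the points that need a little care are (a) noticing that a constant function in $\mathcal{Z}$ is forced to be the zero function, so that "two constant functions" is the correct way to encode "$\ker\hat\phi$ is a line" rather than merely "$\ker\hat\phi\neq\emptyset$", which is the star case of Proposition~\ref{P:star1} (indeed a linear flock is a star flock, and this proposition is the sharpening that distinguishes the line kernel from a point kernel); and (b) the elementary fact that a projective subspace of $PG(2,q)$ with at least two points must be a line, combined with $rank(\mathcal{V})\ge 1$ (we always assume not all of $f,g,h$ are constant), so that $rank(\mathcal{V})\le 1$ forces $rank(\mathcal{V})=1$. I anticipate no genuine obstacle.
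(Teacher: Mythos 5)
Your proof is correct and follows essentially the same route as the paper: both arguments identify the constant functions of the herd space with the points of $\ker\hat\phi$, observe that these are exactly the points common to all planes of the flock (so two of them force a common line), and note that a two-point kernel is a line, giving $rank(\mathcal{V})=1$ and hence a linear herd space. The only cosmetic difference is that you re-derive the ``kernel point lies in all planes'' fact directly from the plane equations, where the paper simply cites Proposition~\ref{P:star1}.
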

\begin{proof}
Suppose that the herd space of flock $\mathcal{F}$ contains
$(P,\langle 0 \rangle)$ and $(Q,\langle 0 \rangle)$ with $P \ne
Q$. Then by Proposition \ref{P:star1} both $P$ and $Q$, and hence
the entire line $PQ$, lie in all planes of $\mathcal{F}$. The
converse is clear. The kernel of $\hat\phi$ has dimension 1, and
the herd space is a linear herd space.
\end{proof}

\begin{proposition} \label{P:starequiv}
    Any star flock is equivalent to one of the form
    $\mathcal{F}(f,\langle 0 \rangle, h)$. Any linear flock is
    equivalent to one of the form $\mathcal{F}(f,\langle 0 \rangle, \langle 0
    \rangle)$.
\end{proposition}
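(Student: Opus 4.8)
The plan is to handle the two assertions separately, reducing each to results already in hand. I would dispose of the linear case first. Take $t$ to be the identity permutation; then $\mathcal{F}(t,\langle 0\rangle,\langle 0\rangle)$ is a genuine flock (its herd space is non-degenerate, since $t$ is a permutation) and a linear one, because $\mathcal{V}(t,\langle 0\rangle,\langle 0\rangle)=\langle t\rangle$ has rank $1$. By Proposition \ref{P:linear3} all linear flocks are equivalent, so every linear flock $\mathcal{F}(f,g,h)$ is equivalent to $\mathcal{F}(t,\langle 0\rangle,\langle 0\rangle)$; this proves the second assertion, and — since $\mathcal{F}(t,\langle 0\rangle,\langle 0\rangle)$ is in particular of the form $\mathcal{F}(f,\langle 0\rangle,h)$ — it also settles the first assertion for those star flocks which happen to be linear.

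For the remaining, \emph{proper} star flocks $\mathcal{F}=\mathcal{F}(f,g,h)$, I would argue as follows. By Proposition \ref{P:star1} the herd space is not a bijection, so $rank(\mathcal{V})\le 2$, and since $\mathcal{F}$ is not linear, $rank(\mathcal{V})=2$. Choose any basis $\{f',h'\}$ of $\mathcal{V}=\langle f,g,h\rangle$ and consider $\mathcal{F}(f',\langle 0\rangle,h')$. This is a genuine flock: degeneracy of $\Gamma(f',\langle 0\rangle,h')$ says there are $u\neq v$ with $F(u)=F(v)$ for every $F\in\langle f',\langle 0\rangle,h'\rangle=\mathcal{V}$, and verbatim the same condition describes degeneracy of $\Gamma(f,g,h)$, so $\Gamma(f',\langle 0\rangle,h')$ is non-degenerate because $\Gamma(f,g,h)$ is, and it is in fact a proper star flock by Proposition \ref{P:star1}. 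Since $\langle f,g,h\rangle=\mathcal{V}=\langle f',\langle 0\rangle,h'\rangle$ as subspaces of $\mathcal{Z}$, I would take $\tau=id$. For $\psi$, express $f,g,h$ in the basis $\{f',h'\}$; because $f,g,h$ span $\mathcal{V}$, the $GF(q)$-linear map sending $(a,b,c)$ to the $\{f',h'\}$-coordinates of $af+bg+ch$ is onto $GF(q)^2$, so its matrix is a $2\times 3$ matrix of rank $2$. Insert a middle row so as to obtain an invertible $3\times 3$ matrix $M$, and let $\psi\colon\langle v\rangle\mapsto\langle Mv\rangle$ be the corresponding homography of $\mathbf{P}(\mathcal{U})$. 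A short computation then shows $\hat\phi_{f',\langle 0\rangle,h'}(\psi(\langle a,b,c\rangle))=\langle af+bg+ch\rangle=\hat\phi_{f,g,h}(\langle a,b,c\rangle)$, so diagram (\ref{D:comm}) commutes; and because $\tau=id$, Lemma \ref{L:comm} makes the condition on point sets of herd covers automatic. Hence $\mathcal{F}(f,g,h)$ is equivalent to $\mathcal{F}(f',\langle 0\rangle,h')$, as required.

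The bookkeeping items — the matrix verification of commutativity and the observation that degeneracy of a herd space depends only on $\mathcal{V}$ — are routine. The point I would be careful about is the construction of $\psi$: this is exactly where $rank(\mathcal{V})=2$ is used, since it guarantees that the $2\times 3$ coordinate matrix has rank precisely $2$ and can therefore be padded to an invertible $3\times 3$ matrix, the inserted row being exactly the slack afforded by the zero middle coordinate function in $\mathcal{F}(f',\langle 0\rangle,h')$. The other convenience is Lemma \ref{L:comm}: appealing to it with $\tau=id$ spares us from checking directly that $\psi$ matches the carriers of the two critical cones, which would otherwise be the most laborious step.
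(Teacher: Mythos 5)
Your proposal is correct and follows the same strategy as the paper: dispose of the linear case via Proposition \ref{P:linear3}, then for a proper star flock take $\tau = id$, exhibit an explicit homography $\psi$ of $\mathbf{P}(\mathcal{U})$ making diagram (\ref{D:comm}) commute, and let Lemma \ref{L:comm} absorb the condition on the point sets of the herd covers. The only real difference is in execution: the paper builds $\psi$ by a three-way case analysis on the position of the kernel point $Q$ (so as to keep two of the original coordinate functions), whereas you construct it uniformly by padding the rank-$2$ coordinate matrix with respect to a chosen basis $\{f',h'\}$ of $\mathcal{V}$ to an invertible $3\times 3$ matrix — and you additionally verify that $\Gamma(f',\langle 0\rangle,h')$ is non-degenerate, a point the paper leaves implicit.
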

\begin{proof}
Consider the flock $\mathcal{F} =\mathcal{F}(f,\langle 0 \rangle,
\langle 0 \rangle)$ where $f$ is a non-constant function. By
Proposition \ref{P:linear1} $\mathcal{F}$ is a linear flock. If
$\mathcal{F}'$ is any linear flock, then $\mathcal{F}'$ is
equivalent to $\mathcal{F}$ by Proposition \ref{P:linear3}.

 Now,
suppose that $\mathcal{F} =\mathcal{F}(f,g,h)$ is a proper star
flock. $\langle f,g,h \rangle$ is a line of
$\mathbf{P}(\mathcal{Z})$ and there exist constants $a,b$ and $c$,
not all zero, so that $af + bg + ch = 0$. The kernel of
$\Gamma(f,g,h)$ is the point $Q = \langle a,b,c \rangle$ in
$\mathbf{P}(\mathcal{U})$. Consider the points $P = \langle 1,0,0
\rangle$ and $R = \langle 0,0,1 \rangle$ of
$\mathbf{P}(\mathcal{U})$. If $Q$ is not on the line $PR$, i.e.,
$b \ne 0$, then the homography $\psi_1 \in PGL(3,q)$ given by the
matrix,
  \begin{equation}
    \left(
    \begin{matrix}
    1 & 0 & 0  \\
    -\frac{a}{b} & 1 & -\frac{c}{b}  \\
    0 & 0 & 1
    \end{matrix}
    \right),
  \end{equation}
\noindent acting on points, fixes $P$ and $R$ and $\psi_1(Q) =
\langle 0,1,0 \rangle$. Taking $\tau = id$ and $\psi = \psi_1$
shows that the flock $\mathcal{F}$ is equivalent to
$\mathcal{F}(f,\langle 0 \rangle, h)$. If $Q$ is on the line $PR$
but not equal to  $P$ (thus, $b = 0, c \ne 0$), then the
homography $\psi_2 \in PGL(3,q)$ given by the matrix,
  \begin{equation}
    \left(
    \begin{matrix}
    1 & 0 & 0  \\
    0 & 0 & 1 \\
    -\frac{a}{c} & 1 & 0
    \end{matrix}
    \right),
  \end{equation}
\noindent acting on points, fixes $P$, $\psi_2(Q) = \langle 0,1,0
\rangle$ and $\psi_2(\langle 0,1,0 \rangle) = R$. Taking $\tau =
id$ and $\psi = \psi_2$ shows that the flock $\mathcal{F}$ is
equivalent to $\mathcal{F}(f,\langle 0 \rangle, g)$. Finally, if
$Q = P$, then the homography $\psi_3 \in PGL(3,q)$ given by the
matrix,
  \begin{equation}
    \left(
    \begin{matrix}
    0 & 1 & 0  \\
    1 & 0 & 0 \\
    0 & 0 & 1
    \end{matrix}
    \right),
  \end{equation}
\noindent acting on points, fixes $R$, $\psi_3(Q) = \langle 0,1,0
\rangle$ and $\psi_3(\langle 0,1,0 \rangle) = P$. Taking $\tau =
id$ and $\psi = \psi_3$ shows that the flock $\mathcal{F}$ is
equivalent to $\mathcal{F}(g,\langle 0 \rangle, h)$.
\end{proof}

We can also characterize star flocks in terms of other functions
in their herd spaces.

\begin{proposition} \label{P:star2}
    A flock $\mathcal{F}$ is a star flock if and only if the
    function classes associated to two distinct points in its herd
    space are equal, i.e., the associated functions are scalar
    multiples.
\end{proposition}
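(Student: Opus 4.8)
The plan is to funnel the entire statement through Proposition~\ref{P:star1}. By that proposition a flock $\mathcal{F}(f,g,h)$ is a star flock precisely when its herd space contains the constant function; and since every element of $\mathcal{Z}$ vanishes at $0$, the only constant function in $\mathcal{Z}$ is the zero function, so the herd space contains a constant function exactly when there is a nonzero triple $(a,b,c)$ with $af+bg+ch=0$ — that is, exactly when $f,g,h$ are linearly dependent over $GF(q)$, equivalently when $\phi_{f,g,h}$ has nontrivial kernel. Thus it suffices to prove the reformulated claim: \emph{two distinct points of $\mathbf{P}(\mathcal{U})$ are associated to the same point of $\mathbf{P}(\mathcal{V})$ by the herd space if and only if $f,g,h$ are linearly dependent.}

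For the ``only if'' direction of this claim, suppose $P=\langle a,b,c\rangle \ne \langle a',b',c'\rangle=P'$ satisfy $\langle af+bg+ch\rangle = \langle a'f+b'g+c'h\rangle$. I would convert this projective identity into a genuine linear relation by choosing a scalar $k$ with $af+bg+ch = k(a'f+b'g+c'h)$: when the common class is a genuine point of $\mathbf{P}(\mathcal{V})$ one may take $k\ne 0$, and then the coefficient vector $(a-ka',\,b-kb',\,c-kc')$ is nonzero because $P\ne P'$ forbids $(a,b,c)=k(a',b',c')$; when instead the common class is $\langle 0\rangle$ one already has $af+bg+ch=0$ with $(a,b,c)\ne 0$. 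Either way $f,g,h$ are linearly dependent.

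For the ``if'' direction I would simply read off the coincident points from the structure theory of star herd spaces in Section~3. If $f,g,h$ are dependent the herd space is a star herd space (indeed this is precisely the non-bijective case). If it is a linear herd space (the discussion preceding Proposition~\ref{P:lhs}), its kernel is a line $\ell$ of $\mathbf{P}(\mathcal{U})$ all of whose $q+1\ge 3$ points carry the class $\langle 0\rangle$; if it is a proper star herd space (the discussion preceding Proposition~\ref{P:pshs}), its kernel is a point $Q$, and on any line through $Q$ the $q\ge 2$ points other than $Q$ all carry one common function class. In every case there are two distinct points with equal associated classes. Combining the reformulated claim with Proposition~\ref{P:star1} (via the equivalences of the first paragraph) then gives the proposition. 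The only spot needing a little care is the case split in the ``only if'' direction between a nonzero common class and $\langle 0\rangle$; beyond that the argument is a direct appeal to Proposition~\ref{P:star1} and to the already-catalogued anatomy of star herd spaces, so I anticipate no real obstacle.
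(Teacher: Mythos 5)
Your proof is correct and follows essentially the same route as the paper: both reduce the statement to Proposition~\ref{P:star1} via the observation that two distinct points share a function class exactly when $\hat\phi$ fails to be injective, equivalently when $\phi_{f,g,h}$ has a nontrivial kernel. The paper compresses this into the definitional identity ``star herd space $=$ non-bijective herd space,'' whereas you unpack it explicitly (including the case split on whether the common class is $\langle 0 \rangle$ and the appeal to the linear/proper-star anatomy), but the underlying argument is the same.
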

\begin{proof}
The herd space of the flock $\mathcal{F}$ contains $(P,\langle f
\rangle)$ and $(Q, \langle f \rangle)$ with $P \ne Q$ if and only
if the herd space is not a bijection. The herd space is a star
herd space and so, its kernel is not empty. The statement now
follows from Proposition \ref{P:star1}.
\end{proof}

\begin{proposition} \label{P:linear2}
    A flock $\mathcal{F}$ is a linear flock if and only if there
    exist three non-collinear points in its herd space whose associated
    function classes are non-constant and equal.
\end{proposition}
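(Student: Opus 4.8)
The plan is to prove the two implications separately, using the earlier structural descriptions of linear and proper star herd spaces together with Propositions \ref{P:star1} and \ref{P:star2}.

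First I would dispatch the forward direction. Suppose $\mathcal{F} = \mathcal{F}(f,g,h)$ is a linear flock, so its herd space is a linear herd space with $rank(\mathcal{V}) = 1$; write $f = \alpha F$, $g = \beta F$, $h = \gamma F$ with $F$ not the zero function (otherwise $rank(\mathcal{V}) = 0$), hence $F$ non-constant since $F(0) = 0$. As described in the paragraph preceding Proposition \ref{P:lhs}, the only ordered pairs in $\Gamma(\alpha F, \beta F, \gamma F)$ are $(P, \langle 0 \rangle)$ for $P$ on the kernel line $\ell$ and $(P, \langle F \rangle)$ for $P$ off $\ell$. The complement of $\ell$ in $\mathbf{P}(\mathcal{U}) \cong PG(2,q)$ is an affine plane and hence contains three non-collinear points; each of these is paired with the single non-constant class $\langle F \rangle$, which supplies the three required points.

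For the converse, suppose $P_1, P_2, P_3$ are three non-collinear points of the herd space of $\mathcal{F} = \mathcal{F}(f,g,h)$ whose associated function classes all equal one non-constant class $\langle F \rangle$. Since the distinct points $P_1$ and $P_2$ carry equal function classes, Proposition \ref{P:star2} shows that $\mathcal{F}$ is a star flock, so its herd space is either a linear herd space (in which case $\mathcal{F}$ is a linear flock and we are done) or a proper star herd space. It remains to rule out the latter, which is the crux of the argument.

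So suppose the herd space of $\mathcal{F}$ is a proper star herd space with kernel the single point $Q$ of $\mathbf{P}(\mathcal{U})$. Since $Q$ is paired with the constant class $\langle 0 \rangle$ while each $P_i$ is paired with the non-constant class $\langle F \rangle$, we have $P_i \ne Q$ for $i = 1,2,3$. The three lines $P_1P_2$, $P_1P_3$, $P_2P_3$ are the sides of a triangle with vertices $P_1, P_2, P_3$, so any two of them meet only in one of the $P_i$; as $Q$ is none of these vertices, $Q$ lies on at most one of the three lines. Hence some pair, say $P_1, P_2$, spans a line $m$ not passing through $Q$. But then, as noted in the proof of Proposition \ref{P:pshs}, the restriction $\hat\phi|_m$ has trivial kernel and is therefore injective, contradicting the fact that the distinct points $P_1, P_2$ of $m$ have the same image $\langle F \rangle$. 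This contradiction rules out the proper star case, so the herd space of $\mathcal{F}$ is a linear herd space and $\mathcal{F}$ is a linear flock. The main obstacle is precisely this last step — converting non-collinearity into a line avoiding the kernel on which $\hat\phi$ must be injective — though once phrased correctly it is short.
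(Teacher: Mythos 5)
Your proof is correct, and the forward direction is essentially the paper's (the paper picks three non-collinear points of the affine-plane point set of the herd cover, you pick them off the kernel line of the linear herd space; same idea). In the converse you take a somewhat different route: after invoking Proposition \ref{P:star2}, you split the star case into linear versus proper star and kill the proper star case by contradiction, observing that the three non-collinear points cannot all avoid having a joining line that misses the single kernel point $Q$, and that on such a line $\hat\phi$ is injective (the bijectivity-on-lines fact from the proof of Proposition \ref{P:pshs}). The paper instead argues directly: since $\hat\phi$ identifies $P$ with $S$ and $P$ with $R$, each of the lines $PS$ and $PR$ contains a kernel point, and these kernel points are distinct because their only possible coincidence would be $P$ itself, which is not in the kernel; this exhibits two constant functions in the herd space and then Proposition \ref{P:linear1} finishes. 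The two arguments encode the same geometric fact (non-collinearity forces the kernel to be a full line), but yours leans on the structural classification of star herd spaces and the injectivity observation, while the paper's is slightly more economical, needing neither the linear/proper-star dichotomy nor a contradiction, and feeding directly into Proposition \ref{P:linear1}.
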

\begin{proof}
If $\mathcal{F}$ is a linear flock then all points of the herd
cover of its herd space are associated to the same permutation.
Choose any three non-collinear points of the affine plane which
is the point set of the herd cover to satisfy the condition. On
the other hand, suppose that $(P,\langle f \rangle), (S, \langle
f \rangle)$ and $(R, \langle f \rangle)$ are in the herd space of
$\mathcal{F}$ with $P, S$ and $R$ distinct non-collinear points of
$\mathbf{P}(\mathcal{U})$ and $f$ a non-constant function. By
Proposition \ref{P:star2}, $\mathcal{F}$ is a star flock. The
restriction of $\hat\phi$ to the line $PS$ is not a bijection, so
this restriction has a nontrivial kernel. Thus, there is a point
$Q_1$ on $PS$ which is in the kernel of $\hat\phi$. Similarly,
there is a point $Q_2$ on the line $PR$ in the kernel of
$\hat\phi$. If $Q_1 = Q_2$ then they would be the point $P$
contradicting the fact that $P$ is not in the kernel since $f$ is
non-constant. Thus, there are at least two constant functions in
the herd space and the result follows from Proposition
\ref{P:linear1}.
\end{proof}

\section{Some Non-Star Flocks}

In this section we will present some examples of the herd spaces
of various non-star flocks. Only a selected few are examined to
illustrate the techniques and ideas concerning herds and herd
spaces. To determine the herd covers of these herd spaces, we
appeal to Dickson \cite{LED:01} for the required information on
permutation polynomials. He has determined all permutation
polynomials of degree $\le 5$ and we rely heavily on this
classification.

    Consider the herd space $\Gamma(t,t^2,t^3)$. The functions of
this herd space are of the forms: $t^3 + at^2 + bt,\, at^2 + t
\text{ and }t^2$ where $a,b \in GF(q)$.  $t^2$ is a permutation
polynomial if, and only if $q = 2^e$. That is to say, $(0,1,0)
\in \mathbf{P}_{\mathcal{HC}} \text{ iff }q=2^e$. $at^2 + t$ is a
permutation polynomial iff $a = 0$, in which case it is a
permutation polynomial for all $q$. Finally, there are two cases
for which $t^3 + at^2 + bt$ is a permutation polynomial. The
first case occurs when $q \equiv -1 \bmod{3}$ and $a = 3c, b =
3c^2 (\forall c \in GF(q))$, and the second when $q = 3^e$ and $a
= 0, b = -n \text{ for } n$ a non-square in $GF(q)$. We summarize
the possibilities for $\mathbf{P}_{\mathcal{HC}}$ of
$\mathcal{HC}(t,t^2,t^3)$ for all $q \ge 5$ in Table
\ref{Ta:one}. For completeness, note that all flocks with $q < 5$
having more than two points in the carriers of their critical
cones are star flocks, as will be shown in the next section.
\begin{table}
 \begin{center}
  \begin{tabular}{| c | c |}
   \hline
   \emph{q} & $\mathbf{P}_{\mathcal{HC}}$ \\ \hline \hline
   $q \equiv 1 \bmod{3},\, q \text{ odd }$ & $(1,0,0)$ \\ \hline
   $q \equiv 1 \bmod{3},\, q = 2^{2k}$ & $(1,0,0), (0,1,0)$ \\ \hline
   $q \equiv -1 \bmod{3},\, q \text{ odd }$ & \emph{conic}, $3x_0x_2 = {x_1}^2$ \\ \hline
   $q \equiv -1 \bmod{3},\, q =2^{2k+1}$ & \emph{hyperconic}, $x_0x_2 = {x_1}^2 \cup (0,1,0)$ \\ \hline
   $q = 3^e$ & $(1,0,0), (0,0,1), \{(-n, 0, 1) \mid n \text{ a non-square }\}$ \\ \hline
  \end{tabular}
  \caption{$\mathbf{P}_{\mathcal{HC}}\text{ of }
\mathcal{HC}(t,t^2,t^3),\, q \ge 5$}\label{Ta:one}
  \end{center}
\end{table}

    We illustrate a few herds of the flock $\mathcal{F}(t,t^2,t^3)$, in the case
that $q \equiv -1 \bmod{3},\, q \text{ odd }$. The standard herd
is $\{f_{\langle 1,0,0,0 \rangle}(t) = t \} \cup \{f_{\langle
3c^2, 3c, 1, 0 \rangle}(t) = 3c^2t + 3ct^2 + t^3 \mid c \in
GF(q)\}$, while the alternate standard herd is given by
$\{f_{\langle 1,0,0,0 \rangle}(t) = t \} \cup \{f_{\langle 3c^2,
3c, 1, 0 \rangle}(t) = t + \frac{1}{c}t^2 + \frac{1}{3c^2}t^3
\mid c \in GF(q)^*\} \cup \{f_{\langle 0,0,1,0 \rangle}(t) = t^3
\}$. The normalized herd in this case is $\{f_{\langle 1,0,0,0
\rangle}(t) = t \} \cup \{f_{\langle 3c^2, 3c, 1, 0 \rangle}(t) =
\frac{3c^2t + 3ct^2 + t^3}{3c^2 + 3c + 1} \mid c \in GF(q)\}$.

Continuing with this example, we note that the homography of
$PG(3,q)$, $q \ne 3^e$ represented by the matrix,
  \begin{equation} \label{E:ftw}
    \left(
    \begin{matrix}
    1 & 0 & 0 & 0 \\
    0 & 3 & 0 & 0 \\
    0 & 0 & 3 & 0 \\
    0 & 0 & 0 & 1
    \end{matrix}
    \right),
  \end{equation}
acting on planes (on the left), will map the flock
$\mathcal{F}(t,t^2,t^3)$ to the flock $\mathcal{F}(t,3t^2,3t^3)$
and the conic $3x_0x_2 = {x_1}^2$ to the conic $x_0x_2 = {x_1}^2$
in the plane $x_3 = 0$ (which is stabilized by this
collineation). Thus, for $q \equiv -1 \bmod{3}$,
$\mathcal{F}(t,t^2,t^3)$ is  equivalent (Proposition \ref{P:coef})
to the Fisher-Thas-Walker (FTW) flock of a quadratic cone as
represented in \cite{JoPa:97}. If $q = 2^{2k+1}$, for $k \ge 1$,
then $t \mapsto t^{\frac{1}{4}}$ is a permutation of $GF(q)$
which fixes $0$. If we reparameterize the planes in the the flock
$\mathcal{F}(t,t^2,t^3)$ using this permutation, we obtain the
herd equivalent flock
$\mathcal{F}(t^{\frac{1}{4}},t^{\frac{1}{2}},t^{\frac{3}{4}})$
(Proposition \ref{P:reindex}). The herd space of this flock is
herd equivalent to the original herd space, and so, has the same
$\mathbf{P}_{\mathcal{HC}}$. The normalized herd of this
 flock is therefore, $\{f_{\langle 1,0,0,0
\rangle}(t) = t^{\frac{1}{4}}, f_{\langle 0,1,0,0 \rangle}(t) =
t^{\frac{1}{2}} \} \cup \{f_{\langle c^2, c, 1, 0 \rangle}(t) =
\frac{c^2t^{\frac{1}{4}} + ct^{\frac{1}{2}} +
t^{\frac{3}{4}}}{c^2 + c + 1} \mid c \in GF(q)\}$. Observe that
all the functions in the normalized herd are o-polynomials. This
herd (without $f_{\langle 0,1,0,0 \rangle}$, and with a different
indexing) is called a \emph{herd of ovals} in \cite{CPPR}.

    We now consider the more complex herd space of the flock $\mathcal{F}(t,t^3,t^5)$.
The functions of this herd space are of the forms: $t^5 + at^3 +
bt,\, at^3 + t \text{ and }t^3$ where $a,b \in GF(q)$. Using
Dickson \cite{LED:01}, we may again list all possible herd covers
of this herd space. For $q \le 5$, this flock is a star flock.
The possibilities for $\mathbf{P}_{\mathcal{HC}}$ of
$\mathcal{HC}(t,t^3,t^5)$ for all $q \ge 7$ are summarized in
Table \ref{Ta:two}.
\begin{table}[htb!]
 \begin{center}
  \begin{tabular}{| c | c |}
   \hline
   \emph{q} & $\mathbf{P}_{\mathcal{HC}}$ \\ \hline \hline
   $q \equiv 1 \pod{15}$ & $(1,0,0)$ \\ \hline
   $q \equiv 2,8 \pod{15}$ & $(0,1,0)$, \emph{conic}  \\ \hline
   $q \equiv 3,12 \pod{15},\, q=3^{2k+1}$ &$(0,1,0)$, \emph{conic}, \emph{collinear set}$_1$ \\ \hline
   $q \equiv 4 \pod{15}$ & $(1,0,0),(0,0,1)$\\ \hline
   $q \equiv 5 \pod{15},\,q=5^{2k+1}$ & $(0,1,0)$, \emph{collinear set}$_2$, \emph{partial conic}   \\ \hline
   $q \equiv 6 \pod{15},\,q=3^{4k}$ & $(0,1,0)$, \emph{collinear set}$_1$\\ \hline
   $q \equiv 7,13 \pod{15}$  & \emph{conic}    \\ \hline
   $q \equiv 9 \pod{15},\,q=3^{4k+2}$ & $(1,0,0),(0,1,0),(0,0,1)$,\emph{collinear set}$_1$\\ \hline
   $q \equiv 10 \pod{15},\,q=5^{2k}$ & \emph{collinear set}$_2$, \emph{partial conic} \\ \hline
   $q \equiv 11 \pod{15}$ & $(1,0,0), (0,1,0)$\\ \hline
   $q \equiv 14 \pod{15}$ & $(1,0,0), (0,1,0), (0,0,1)$\\ \hline
   \hline
   \multicolumn{2}{| c |}{\emph{conic} : $5x_0x_2 = {x_1}^2$} \\
   \hline
   \multicolumn{2}{| c |}{\emph{collinear set}$_1$ : $(-n,1,0)$, n a
   non-square} \\ \hline
   \multicolumn{2}{| c |}{\emph{collinear set}$_2$ : $(-s,0,1), \text{ s } \ne \,4^{th} \text{
   power}$} \\ \hline
   \multicolumn{2}{| c |}{\emph{partial conic} : $4x_0x_2 = x_1^2 \text{ with } x_1 = 2n$, n a
   non-square} \\ \hline
  \end{tabular}
  \caption{$\mathbf{P}_{\mathcal{HC}}\text{ of }
\mathcal{HC}(t,t^3,t^5),\, q \ge 7$}\label{Ta:two}
  \end{center}
\end{table}

As we see from the table, the only cases in which
$\mathbf{P}_{\mathcal{HC}}$ contains a conic occur when $q \equiv
2,3,7,8,12,13 \bmod{15}$ which simplifies to $q \equiv \pm 2
\bmod{5}$. These cases give rise to flocks of quadratic cones.
For $q$ odd, these are equivalent to the Kantor K2 flocks
\cite{JAT:87} and for $q$ even (more precisely, $q = 2^{2k+1}$,
occurring when $q$ is even and  $q \equiv 2,8 \bmod{15}$) they
are known as the Payne \cite{SEP:85} flocks. As the description
of this flock is characteristic-free, we would prefer to call
this flock the Kantor-Payne flock (as has been done elsewhere in
the literature).

    Another example, illustrating a flock whose coordinate functions
are not all monomial, can be obtained by considering the function
$f(x) = x^5 + 2nx^3 + n^2x$ where $n$ is a non-square in $GF(q)$.
Dickson \cite{LED:01} has shown that this is a permutation
polynomial if and only if $q = 5^e$. This implies that over
$GF(5^e)$, $f(x+a) - (2na^3 + n^2a) = x^5 + 2nx^3 + 6nax^2 +
(6na^2 + n^2)x$ is a permutation polynomial $\forall a \in
GF(5^e)$. Consider the flock $\mathcal{F}(t,t^2,t^5 + 2nt^3)$.
For $q = 5^e$, $\mathbf{P}_{\mathcal{HC}}$ of this flock consists
of $\{(1,0,0,0)\} \cup \{(na^2 + n^2, na, 1, 0) \mid a \in
GF(5^e)\}$, in other words, the points of the conic $nx_0x_2 =
{x_1}^2 + n^3{x_2}^2$. These flocks are equivalent to the K3 or
``Kantor likeable'' flocks due to Gevaert and Johnson
\cite{GeJo:88}. Consider the special case of $q = 5$. Since $x^5 =
x$ over $GF(5)$, the permutation polynomial reduces to $x^3 +
3ax^2 + (3a^2 + 3n + 1)x$. This in turn implies that the Kantor
likeable flocks in $PG(3,5)$ are equivalent to the FTW flock, a
point that is implied, but never stated in \cite{DeHe:92}.

    Our last example is restricted to even characteristic. Let $q =
2^e$ and chose $i < e$ so that $(2^i + 1, q-1) = 1$. In this case,
$f(x) = x^{2^i+1}$ is a permutation polynomial over $GF(2^e)$.
Therefore, $f(x+a) - a^{2^i+1} = x^{2^i+1} + ax^{2^i} + a^{2^i}x$
is a permutation polynomial over $GF(q),\, \forall a \in GF(q)$.
The $\mathbf{P}_{\mathcal{HC}}$ of the flock
$\mathcal{F}(t,t^{2^i},t^{2^i+1})$ contains the points of $x_0x_2
= {x_1}^{2^i}$. If $(i,e) = 1$ this curve is a translation oval
and $\mathcal{F}(t,t^{2^i},t^{2^i+1})$ is an $\alpha$-flock
\cite{WEC:98}. This situation arises only when $e$ is odd. This
is an alternate (and simpler) proof of Theorem 5 in
\cite{WEC:98}, whose proof simplified that of a result of Fisher
and Thas \cite{FiTh:79}. When $(i,e) > 1$ we obtain flocks of
non-oval cones of a type that we have called $\beta$-flocks
(\cite{WEC:98b}).

\section{The Classification of Flocks for $q \le 7$}

All flocks of arbitrary cones can be easily determined for small
$q$, by examining herd spaces. Since there exist cones which do
not admit any flock \cite{WEC2}, all references to cones in this
section are to non-empty cones which admit at least one flock.
The material in this section should be compared to \cite{JAT:87}
and \cite{DeGeTh:88} where the flocks of quadratic cones for $q$
in this range are determined by other methods. \\

\noindent \textbf{Remark:} We are not attempting a complete
classification of these flocks, although the methods used here
could be used to do so. Rather, we are trying to classify the
flocks of ``interesting'' cones, that is, cones whose carriers
are not too small and/or contained in just a few lines. While it
is possible to define ``interesting'', such a definition is bound
to be arbitrary in nature and so we will not do so here. Also, we
will not try to refine the classification of the star flocks other
than to indicate when they are linear. Star flocks are examined
in more detail in \cite{WEC2}.

\subsection{$q=2$}
Since any two distinct planes of $PG(3,q)$ meet in a line, a
flock of any cone in $PG(3,2)$ is a linear flock.

\subsection{$q=3$}

$\mathbf{P}(\mathcal{Z})$ is isomorphic to $PG(1,3)$, the
projective line, and $\mathcal{S}$ is a point of this line.
Obviously, all flocks in $PG(3,3)$ are star flocks. The point set
of the herd cover of any proper star herd space consists of just
one line by Proposition \ref{P:pshs}. Therefore, if the carrier
of a cone contains at least three non-collinear points, a flock
of the cone must be linear. Thus the flocks of all quadratic
cones in $PG(3,3)$ are linear.

\subsection{$q=4$}
$\mathbf{P}(\mathcal{Z})$ is isomorphic to $PG(2,4)$, the
projective plane of order 4, and $\mathcal{S}$ is a pair of points
in this plane. If a cone has more than two points in its carrier,
then it can only admit star flocks. A herd cover of a proper star
herd space can contain at most two lines, so if the carrier of a
cone contains a 5-arc, the cone can admit only linear flocks. The
only non-star flock is the FTW flock, but its critical cone is
flat. We again have that the flocks of all quadratic cones in
$PG(3,4)$ are linear.

\subsection{$q=5$}

$\mathbf{P}(\mathcal{Z})$ is isomorphic to $PG(3,5)$ and
$\mathcal{S}$ consists of 6 points which lie in a hyperplane,
i.e., they are coplanar in this space. The degree of a
permutation polynomial over $GF(5)$ can only be 1 or 3. The six
permutation points are $\langle t \rangle$ and the five points
$\langle 3at + 3a^2t^2 + t^3 \rangle$ for $a \in GF(5)$. The
point set of the herd cover is the conic $2x_0x_2 = {x_1}^2$ in
the herd space $\Gamma(t,t^2,t^3)$. Any other plane of
$\mathbf{P}(\mathcal{Z})$ can contain at most two permutation
points. Thus, the only non-star flock of a cone of $PG(3,5)$
whose carrier contains at least three points is the FTW flock (up
to equivalence). There are at most two lines in the point set of
the herd cover of any proper star herd space. A star flock of a
cone in $PG(3,5)$ whose carrier contains a 5-arc must be a linear
flock. The flocks of quadratic cones are either linear or FTW.

\subsection{$q=7$}

In this case $\mathbf{P}(\mathcal{Z})$ is isomorphic to $PG(5,7)$
and contains $5! =120$ permutation points. The permutation
polynomials over $GF(7)$ can only have degrees of 5, 4 or 1.
Using Dickson's list, we can explicitly exhibit all the
permutation polynomials over $GF(7)$. This is done in Table
\ref{Ta:three}.
\begin{table}
 \begin{center}
  \begin{tabular}{| c | c | l |}
   \hline
   \emph{Type} & \emph{Num.} & \emph{Permutation Polynomial} \\ \hline \hline
    I  &  1  &   $t$ \\ \hline
    II &  49 &{\parbox{4in}{ $(5a^4 + 3a^2b + 3b^2)t + (3a^3 + 3ab)t^2 + (3a^2 +
    b)t^3\\ \qquad + 5at^4 + t^5, \,\forall a,b\in GF(7)$ }}\\ \hline
    III & 52 & {\parbox{4in}{$(5a^4 + 3a^2n \pm 2a + 3n^2)t + (3a^3+3an \pm 1)t^2
    +(3a^2+n)t^3\\ \qquad + 5at^4 +t^5, \, \forall a \in GF(7), n$ a
    nonsquare }}\\ \hline
    IV & 14 & $(5a^4 \pm 4a)t + (3a^3 \pm 2)t^2 + 3a^2t^3 + 5at^4
    + t^5, \forall a \in GF(7)$ \\ \hline
    V & 14 & $(4a^3 \pm 3)t + 6a^2t^2 + 4at^3 + t^4, \forall a \in
    GF(7)$ \\ \hline
  \end{tabular}
  \caption{Representative Permutation Polynomials over $GF(7)$}\label{Ta:three}
  \end{center}
\end{table}

As points of $\mathbf{P}(\mathcal{Z})$ these 120 permutation
points lie in the hyperplane $\mathbf{P}(\mathcal{Z}_1) =\langle
t,t^2,t^3,t^4,t^5 \rangle$. Any plane of
$\mathbf{P}(\mathcal{Z})$ which does not lie in this hyperplane
must meet the hyperplane in a line. The permutation points of the
non-star herd spaces that correspond to these planes must all lie
on a line, and so, such flocks can only have flat cones as
critical cones. A non-star flock of a non-flat cone would
therefore correspond to a plane which lies in
$\mathbf{P}(\mathcal{Z}_1)$. A simple calculation shows that all
the permutation points lie on the quadric $Q(4,7)$ of
$\mathbf{P}(\mathcal{Z}_1)$ given by $3x_2^2 = x_0x_4 + x_1x_3,\,
x_5 = 0$.  At each point of the hyperbolic quadric, there are 8
generator lines which form a quadratic cone that lies in a unique
3-space whose only intersection with the hyperbolic quadric is
this cone. For example, at the point $\langle t \rangle$ on the
quadric, the type V points lie on seven generators of such a cone
with vertex $\langle t \rangle$, and the 3-space containing this
cone is $\langle t, t^2, t^3, t^4 \rangle$. The type II points
lie in seven planes, $\pi_a : \langle t, (t+a)^3 -a^3, (t+a)^5
-a^5 \rangle, a \in GF(q)$, the points in each plane together with
$\langle t \rangle$ forming a conic. The remaining points come in
plus/minus pairs, and each such pair is collinear with a unique
point of type II (those with $b = 0$ or $b$ a non-square in
$GF(q)$). These lines are generator lines of the hyperbolic
quadric (lying completely in the quadric). To determine all
possible non-star flocks of non-empty cones we need only examine
the planes of $\mathbf{P}(\mathcal{Z}_1)$ which contain $\langle
t \rangle$. There are only four types of planes, with respect to
the hyperbolic quadric, through a point of the quadric. Namely,
planes whose intersection with the hyperbolic quadric consists of
precisely 1 generator, 2 generators, a conic or a single point.
Thus, the herd cover of any non-star flock that is not contained
in two lines must be an arc which lies on a conic. A simple
computer-aided calculation shows that the only planes through
$\langle t \rangle$ which contain at least seven points are the
$\pi_a$. The flocks that correspond to these planes are all
projectively equivalent, as the following calculation shows:
  \begin{equation*}
    \left(
    \begin{matrix}
    1 & 0 & 0 & -a \\
    0 & 1 & 0 & -a^3 \\
    0 & 0 & 1 & -a^5 \\
    0 & 0 & 0 & 1
    \end{matrix}
    \right) \left(
    \begin{matrix}
    t \\
    t^3\\
    t^5\\
    1
    \end{matrix}
    \right) = \left(
    \begin{matrix}
     t - a \\
    t^3 - a^3\\
    t^5 - a^5\\
    1
    \end{matrix}
    \right) = \left(
    \begin{matrix}
     s \\
    (s+a)^3 - a^3\\
    (s+a)^5 - a^5\\
    1
    \end{matrix}
    \right) ,
  \end{equation*}
\noindent where the last step is just a reparameterization of the
flock by $s = t-a$. We can therefore conclude that the only
non-star flocks of quadratic cones in $PG(3,7)$ are equivalent to
$\mathcal{F}(t,t^3,t^5)$, the Kantor-Payne Flock.

    A line can intersect $\mathcal{S}$ in at most 3 points (a
generator of the hyperbolic quadric). So, there are at most three
lines in the point set of the herd cover of any proper star herd
space. A star flock of a cone in $PG(3,7)$ whose carrier contains
a 7-arc must be a linear flock. The flocks of quadratic cones are
either linear or Kantor-Payne.

\noindent
{\bf Address of the author:}\\
W.E. Cherowitzo \\
Department of Mathematical and Statistical Sciences\\
University of Colorado Denver\\
Campus Box 170, P.O. Box 173364\\
Denver, CO 80217-3364\\
U.S.A.\\
e-mail: william.cherowitzo@ucdenver.edu\\
http://math.ucdenver.edu/$\sim$wcherowi\\

\end{document}